\numberwithin{equation}{section}
\long\def\eatit#1{}
\newtheorem{Thm}{Theorem}[section]
\newtheorem{Prop}[Thm]{Proposition}
\newtheorem{Lem}[Thm]{Lemma}
\newtheorem{Cor}[Thm]{Corollary}
\theoremstyle{definition}
\newtheorem{Def}[Thm]{Definition}
\newtheorem{Ex}[Thm]{Example}
\newtheorem{Rmk}[Thm]{Remark}
\newcommand{\PP}{{\mathbb{P}}}
\newcommand{\CC}{{\mathbb{C}}}
\newcommand{\RR}{{\mathbb{R}}}
\newcommand{\ZZ}{{\mathbb{Z}}}
\newcommand{\hadamardot}{\star}
\newcommand{\sqfrhadamardot}{\underline{\hadamardot}}
\newcommand{\Lzeroone}{$\textup{[}01\textup{]}$}
\newcommand{\Lzerotwo}{$\textup{[}02\textup{]}$}
\newcommand{\Lzerothree}{$\textup{[}03\textup{]}$}
\newcommand{\Lonetwo}{$\textup{[}12\textup{]}$}
\newcommand{\Lonethree}{$\textup{[}13\textup{]}$}
\newcommand{\Ltwothree}{$\textup{[}23\textup{]}$}
\newcommand{\Mzeroone}{$\{01\}$}
\newcommand{\Mzerotwo}{$\{02\}$}
\newcommand{\Mzerothree}{$\{03\}$}
\newcommand{\Monetwo}{$\{12\}$}
\newcommand{\Monethree}{$\{13\}$}
\newcommand{\Mtwothree}{$\{23\}$}
\begin{document}


\title{Hadamard products of linear spaces}

\author{Cristiano Bocci, Enrico Carlini and Joe Kileel}

\address{Cristiano Bocci\\
Department of Information Engineering and Mathematics, University of Siena\\
Via Roma, 56 Siena, Italy}
\email{cristiano.bocci@unisi.it}

\address{Enrico Carlini\\
School of Mathematical Sciences, Monash University\\
Wellington Road, Clayton, Australia
\vskip0.1cm Department of Mathematical Sciences, Politecnico di Torino\\
Corso Duca degli Abruzzi 24, Turin, Italy}
\email{enrico.carlini@monash.edu, enrico.carlini@polito.it}

\address{Joe Kileel\\
Department of Mathematics, University of California\\
 Berkeley, CA 94720, USA}
\email{jkileel@math.berkeley.edu}


\begin{abstract} We describe properties of Hadamard products of algebraic varieties.  We show any Hadamard power of a line is a linear space, and we construct star configurations from products of collinear points.  Tropical geometry is used to find the degree of Hadamard products of other linear spaces.

\end{abstract}

\keywords{Hadamard products, linear spaces, star configurations, tropical geometry, bracket polynomials}
\subjclass[2010]{14T05, 14N05, 68W30, 14M99}

\maketitle

\section{Introduction}

The concept of Hadamard product, as matrix entry-wise multiplication, is well known in linear algebra: it has nice properties in matrix analysis (\cite{H1,HM,Liu}) and has applications in both statistics and physics (\cite{Liu2,LNP,LNP2,LT}).
Recently, in the papers \cite{CMS, CTY}, the authors use this entry-wise multiplication to define a Hadamard product between projective varieties: given varieties $X,Y\subset\PP^n$,  their Hadamard product $X \hadamardot Y$ is the closure of the image of the rational map
\[
X \times Y \dashrightarrow \PP^n, \quad  ([a_0:\dots : a_n], [b_0: \dots :b_n])\mapsto [a_0b_0 :a_1b_1:\ldots :a_nb_n].
\]
For any projective variety $X$, we may consider its Hadamard square $X^{\hadamardot 2} = X \hadamardot X$ and its higher Hadamard powers $X^{\hadamardot r} = X \hadamardot X^{\hadamardot (r-1)}$.
In \cite{CMS}, the authors use this definition to describe the algebraic variety associated to the restricted Boltzmann machine, which is the undirected graphical model for binary random variables specified by the bipartite graph $K_{r,n}$. This variety is the $r-$th Hadamard power of the first secant variety of $(\PP^1)^n$. Note that  \cite{CTY} concerns the case $r=2, n=4$.

Hadamard products and powers are in fact well-connected to other operations of varieties.  They are the multiplicative analogs of joins and secant varieties, and in tropical geometry, tropicalized Hadamard products equal Minkowski sums.  It is natural to study properties of this new operation, and see its effect on various varieties.  This paper is a first step in that direction.   Here is how it is organized.

In Section 2, we start by giving a different definition of the Hadamard product of varieties in terms of projections of Segre products. As a first important result we give a Hadamard version of Terracini's Lemma (Lemma \ref{Terracini}) which describes the tangent space of $X\hadamardot Y$ at $p\hadamardot q$ as the span $\langle p\hadamardot T_ q (Y),q\hadamardot T_ p (X)  \rangle$.  We also point out varieties parametrized by monomials are closed under Hadamard product.

In Section 3, our attention is fixed on the Hadamard powers of a line.  This case is special and admits direct analysis by projective geometry.  Theorem \ref{mainthmLr} is that the powers of a line are linear spaces, and our proof uses the Hadamard version of Terracini's Lemma.  Proposition \ref{Plucker} then offers explicit equations.

In Section 4, we study the $r$-th square-free Hadamard power $Z^{\sqfrhadamardot r}$ of a finite set $Z$ of projective points.  We obtain a classification when $Z$ is collinear, using star configurations.  In contrast with the standard approach to construct star configurations as intersections of a set of randomly chosen linear spaces, which can give points with complicated coordinates,  Theorem \ref{final} permits a cheaper construction, easily implementable in computer algebra software.

In Section 5, we recall basic definitions in tropical geometry, and give the precise connection to Hadamard products.  Motivated by tropical considerations, we define a refined expected dimension formula for Hadamard products.  Interestingly, Hadamard products can have deficient dimension (Example \ref{dimcounterexample}).

In Section 6, we analyze Hadamard products of linear spaces in general.  This case requires the use of tropical machinery, and  we derive a result analogous to identifiability in the theory of secant varieties along the way.  The paper culminates with a general degree formula for Hadamard products of linear spaces (Theorem \ref{degformula}),  an extension to include \textit{reciprocal linear spaces} (Corollary \ref{reciprocal}), and then two non-trivial bracket polynomial formulas.

\vspace{.2cm}

\section{Preliminaries}
We work over the field of complex numbers $\mathbb{C}$.

\begin{Def}
Let $H_i\subset\PP^n,i=0,\ldots,n$, be the hyperplane $x_i=0$ and set
$$\Delta_i=\bigcup_{0\leq j_1<\ldots<j_{n-i}\leq n}H_{j_1}\cap\ldots\cap H_{j_{n-i}}.$$
\end{Def}

In other words, $\Delta_i$ is the $i-$dimensional variety of points having {\em at most} $i+1$ non-zero coordinates. Thus $\Delta_0$ is the set of coordinates points and $\Delta_{n-1}$ is the union of the coordinate hyperplanes. Note that elements of $\Delta_i$ have {\em at least} $n-i$ zero coordinates. We have the following chain of inclusions:
\begin{equation}\label{inclusiondelta}
\Delta_0=\{[1:0:\ldots:0],\ldots,[0:\ldots:0:1]\}\subset\Delta_1\subset\ldots\subset\Delta_{n-1}\subset\Delta_{n}=\PP^n.\end{equation}

\begin{Def}\label{projectiondef} Given varieties $X,Y\subset\PP^n$ we consider the usual Segre product
\[X\times Y\subset\PP^N\]
\[([a_0:\dots : a_n], [b_0: \dots :b_n])\mapsto [a_0b_0 :a_0b_1:\ldots :a_nb_n]\]
and we denote with $z_{ij}$ the coordinates in $\PP^N$. Let $\pi:\PP^N \dashrightarrow \PP^n$ be the projection map from the linear space $\Lambda$ defined by equations $z_{ii}=0,i=0,\ldots,n$. The {\em Hadamard product} of $X$ and $Y$ is
\[X\hadamardot Y=\overline{\pi(X\times Y)},\]
where the closure is taken in the Zariski topology.
\end{Def}


\begin{Def} Let $p,q\in \PP^n$ be two points of coordinates respectively $[a_0:a_1:\ldots:a_n]$ and $[b_0:b_1:\ldots:b_n]$. If $a_ib_i\not= 0$ for some $i$, their Hadamard product $p\hadamardot q$ of $p$ and $q$, is defined as
\[
p\hadamardot q=[a_0b_0:a_1b_1:\ldots:a_nb_n].
\]
If  $a_ib_i= 0$ for all $i=0,\dots, n$ then we say $p\hadamardot q$ is not defined.
\end{Def}

\begin{Rmk}\label{Hadamrdasunion} We have that
\[X\hadamardot Y=\overline{\{p\hadamardot q: p\in X, q\in Y, p\hadamardot q\mbox{ is defined} \}}.\]
Hence, our definition of Hadamard product corresponds with the one in \cite{CMS} and \cite{CTY}.
\end{Rmk}

\begin{Rmk}\label{limitremark} To understand the role of the closure operation in Definition \ref{projectiondef} of Hadamard product we can proceed as follows. If $\pi$ is defined on all of $X \times Y$, then no closure is needed, by \cite[Corollary 14.2]{Eis}.   Otherwise, we can blow up along $\Lambda\cap (X\times Y)$. The points of $X\hadamardot Y$ are hence limits of $p(t)\hadamardot q(s)$ for curves $p(t)\in X$ and $q(s)\in Y$, in the Euclidean topology by \cite[\textsection I.10, Corollary 1]{M}.  Thus the closure operation only adds points obtained when $p(t)\hadamardot q(s)$ goes to a point of $\Lambda\cap (X\times Y)$.
\end{Rmk}

\begin{Rmk}
In general, it is difficult to compute $X \hadamardot Y$.  The situation seems as bad as computing joins.  Suppose we have for input the defining homogeneous ideals $I$ and $J$ of $X$ and $Y$ in $\PP^n$, and we wish to output the defining ideal $K$ of $X \hadamardot Y$.  It may be achieved with an elimination as follows:
\begin{itemize}
\item Work in the ring $\CC[x_0, \ldots, x_n, y_0, \ldots, y_n, z_0, \ldots, z_n]$ with $3n+3$ variables.
\item Form the ideal $I(x) + J(y) + \langle z_0 - x_0y_0, \ldots, z_n - x_ny_n \rangle$.
\item Eliminate the $2n+2$ variables $x_0, \ldots, y_n$.
\end{itemize}
However, this elimination is a heavy computation.  We know of no efficient algorithm to compute Hadamard products in general.
\end{Rmk}

Note that $X\hadamardot Y$ is a variety such that $\dim (X\hadamardot Y)\leq  \dim (X)+\dim (Y)$ and that $X\hadamardot Y$ can be empty even if neither $X$ nor $Y$ is empty.  In Section 5, we will give a refined upper bound on $\dim(X\hadamardot Y)$.

Note that if $X, Y$ are irreducible, then $X \times Y$ is irreducible, so $X \hadamardot Y =  \overline{\pi(X\times Y)}$ is irreducible.  Also, associativity of Hadamard product of points extends to associativity of Hadamard product of varieties:

\begin{Lem}\label{assoc}
If $X,Y,Z\subset\PP^n$ are varieties, then $(X\hadamardot Y)\hadamardot Z = X\hadamardot (Y\hadamardot Z)$.
\end{Lem}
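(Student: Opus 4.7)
My plan is to show that both iterated products coincide with the closure of
\[
U \;=\; \bigl\{\, p\hadamardot q\hadamardot r \,:\, p\in X,\ q\in Y,\ r\in Z,\ \text{some } a_ib_ic_i\neq 0 \,\bigr\},
\]
where associativity of Hadamard product of \emph{points} makes the triple expression unambiguous: any grouping $(p\hadamardot q)\hadamardot r$ or $p\hadamardot(q\hadamardot r)$ is defined precisely when some $a_ib_ic_i\neq 0$, and all groupings then yield $[a_0b_0c_0:\cdots:a_nb_nc_n]$.

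The inclusion $\overline{U}\subseteq (X\hadamardot Y)\hadamardot Z$ is essentially immediate. For $p\hadamardot q\hadamardot r\in U$ some $a_ib_i$ must be nonzero (else every $a_ib_ic_i$ vanishes), so $p\hadamardot q$ is defined; by Remark \ref{Hadamrdasunion} it lies in $X\hadamardot Y$, and then $(p\hadamardot q)\hadamardot r$ is a defined Hadamard product of a point of $X\hadamardot Y$ with a point of $Z$, so belongs to $(X\hadamardot Y)\hadamardot Z$. Taking closure gives $\overline{U}\subseteq (X\hadamardot Y)\hadamardot Z$. For the reverse, by Remark \ref{Hadamrdasunion} it suffices to show each $s\hadamardot r$ with $s\in X\hadamardot Y$, $r\in Z$, and $s\hadamardot r$ defined lies in $\overline{U}$. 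By Remark \ref{limitremark} I may write $s=\lim_{t\to 0}p(t)\hadamardot q(t)$ as a Euclidean limit with $p(t)\in X$, $q(t)\in Y$ and $p(t)\hadamardot q(t)$ defined. Since pointwise Hadamard product is a morphism on the open locus where it is defined and $s\hadamardot r$ is defined, for $t$ sufficiently small the product $(p(t)\hadamardot q(t))\hadamardot r$ is also defined and tends to $s\hadamardot r$; by pointwise associativity each approximant equals $p(t)\hadamardot q(t)\hadamardot r\in U$, whence $s\hadamardot r\in\overline{U}$. The symmetric argument, swapping the roles of $(X\hadamardot Y,Z)$ and $(X,Y\hadamardot Z)$, yields $X\hadamardot(Y\hadamardot Z)=\overline{U}$, and the two varieties coincide.

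The main obstacle is the indeterminacy built into Definition \ref{projectiondef}: a general point of $X\hadamardot Y$ is not literally some defined $p\hadamardot q$ with $p\in X$, $q\in Y$, but only an Euclidean limit of such. A purely algebraic/set-theoretic manipulation therefore seems inadequate, and the argument must route through the analytic limit description supplied by Remark \ref{limitremark}, together with continuity of the Hadamard map on its domain of definition, in order to reduce from points of $(X\hadamardot Y)\hadamardot Z$ to honest triple products in $U$.
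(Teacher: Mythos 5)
Your proof is correct, and it shares the paper's overall framing: both iterated products are identified with the Zariski closure $\overline{U}$ of the set of defined triple products, the inclusion $\overline{U}\subseteq (X\hadamardot Y)\hadamardot Z$ being the easy one in both treatments. Where you genuinely diverge is in the mechanism for the nontrivial inclusion $(X\hadamardot Y)\hadamardot Z\subseteq\overline{U}$. The paper handles the closure purely algebraically: for fixed $z\in Z$ and a homogeneous $f\in I(\overline{U})$, the substituted polynomial $f(z_0x_0,\ldots,z_nx_n)$ vanishes on the set $\{p\hadamardot q\}$ (either because $p\hadamardot q\hadamardot z$ is defined and lies in $U$, or because the substitution lands at the zero vector and $f$ has positive degree), hence on its Zariski closure $X\hadamardot Y$, so $f$ vanishes at $s\hadamardot z$ for every $s\in X\hadamardot Y$. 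You instead route through Remark \ref{limitremark}: every $s\in X\hadamardot Y$ is a Euclidean limit of honest products $p(t)\hadamardot q(t)$, and continuity of $x\mapsto x\hadamardot r$ on its Euclidean-open domain of definition transports the limit to $s\hadamardot r$. Both arguments are valid. The paper's substitution trick is self-contained and never leaves the Zariski topology, while yours leans on the analytic content of the blow-up argument behind Remark \ref{limitremark} but is arguably more transparent about where the extra closure points come from. One small point worth making explicit in your write-up: when you conclude $s\hadamardot r\in\overline{U}$ from a Euclidean limit of points of $U$, you are using that the Euclidean closure of a set is contained in its Zariski closure.
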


\begin{proof} In fact, both sides equal
\[ \Pi = \overline{\{p\hadamardot q \hadamardot z: p\in X, q\in Y, z\in Z, p\hadamardot q\hadamardot z \mbox{ is defined} \}}. \]
\noindent To see $(X\hadamardot Y)\hadamardot Z \subset \Pi$, let $s \in X\hadamardot Y, z\in Z$ and $f$ be a homogeneous polynomial in the ideal $I(\Pi)$.  Consider $f(z_0 \cdotp \-- , z_1 \cdotp \-- , \ldots , z_n \cdotp \--)$ a polynomial defined up to scale.  It vanishes on ${\{p\hadamardot q : p\in X, q\in Y, p\hadamardot q \mbox{ is defined}\}}$, and hence at $s$ in the closure.  The other inclusions then are clear.
\end{proof}

In what follows we will often work with Hadamard products of a variety with itself, thus we give the following:

\begin{Def} Given a positive integer $r$ and a variety $X\subset\PP^n$, the {\em r-{th} Hadamard power} of $X$  is
\[X^{\hadamardot r}= X\hadamardot X^{\hadamardot (r-1)},\]
where $X^{\hadamardot 0}=[1: \dots : 1]$.
\end{Def}
Note that $\dim (X^{\hadamardot r})\leq r\dim (X) $ and the $r$-th Hadamard power cannot be empty if $X$ is not empty.

As the definition of Hadamard product involves a closure operation, it is not trivial to describe all points of $X^{\hadamardot r}$. However, we can say something about the maximal number of zero coordinates of any point in the Hadamard power.

\begin{Lem}\label{numberofzeroes}
If $X\cap\Delta_{n-i}=\emptyset$, then $X^{\hadamardot r}\cap\Delta_{n-ri+r-1}=\emptyset$.

\end{Lem}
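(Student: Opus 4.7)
The approach is to recast the hypothesis and conclusion in terms of the coordinate support of projective points. For $p=[p_0:\cdots:p_n]\in \PP^n$, let $\operatorname{supp}(p) = \{k : p_k \ne 0\}$. The hypothesis $X \cap \Delta_{n-i} = \emptyset$ says precisely that every $p \in X$ has $|\operatorname{supp}(p)| \ge n+2-i$, while the desired conclusion $X^{\hadamardot r} \cap \Delta_{n-ri+r-1} = \emptyset$ says precisely that every $s \in X^{\hadamardot r}$ has $|\operatorname{supp}(s)| \ge n+1-r(i-1)$.

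The key step is the following claim, which I then iterate: \emph{if $X, Y \subset \PP^n$ satisfy $|\operatorname{supp}(p)| \ge a$ for all $p\in X$ and $|\operatorname{supp}(q)| \ge b$ for all $q\in Y$, with $a + b \ge n + 2$, then $|\operatorname{supp}(s)| \ge a + b - (n+1)$ for all $s \in X \hadamardot Y$.} On the open image of the projection $\pi$ the bound is immediate: for $p \in X, q \in Y$ with $p\hadamardot q$ defined, $\operatorname{supp}(p \hadamardot q) = \operatorname{supp}(p) \cap \operatorname{supp}(q)$, and inclusion--exclusion in the $(n+1)$-element universe of coordinate indices gives $|\operatorname{supp}(p \hadamardot q)| \ge a+b-(n+1)$.

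The obstacle lies in the closure: a limit point could a priori have strictly smaller support than any generic Hadamard product, as a curve in $X \times Y$ degenerates toward the indeterminacy locus of $\pi$. To handle this, given $s \in X \hadamardot Y$, invoke Remark \ref{limitremark} to write $s$ as a Euclidean limit of $p_k \hadamardot q_k$ with $p_k \in X, q_k \in Y$; compactness of $X,Y$ in $\PP^n$ yields subsequential limits $p_\infty \in X$ and $q_\infty \in Y$. The hypothesis $a+b \ge n+2$ forces $|\operatorname{supp}(p_\infty)| + |\operatorname{supp}(q_\infty)| > n+1$, so by pigeonhole the two supports overlap; hence $p_\infty \hadamardot q_\infty$ is defined in $\PP^n$. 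Choosing unit-norm lifts $\tilde p_k \to \tilde p_\infty$ and $\tilde q_k \to \tilde q_\infty$ in $\CC^{n+1}$, continuity of coordinatewise multiplication gives $\tilde p_k \hadamardot \tilde q_k \to \tilde p_\infty \hadamardot \tilde q_\infty \neq 0$; projectivizing yields $s = p_\infty \hadamardot q_\infty$, whose support has size at least $a + b - (n+1)$.

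The lemma now follows by induction on $r$: with $a = n+2-i$ and $b = n+1-(r-1)(i-1)$, the claim produces the desired bound $n+1-r(i-1)$, and its pigeonhole hypothesis reduces to $n \ge r(i-1)$. When $n < r(i-1)+1$, the index $n-ri+r-1$ is negative, $\Delta_{n-ri+r-1} = \emptyset$ by inspection of the definition, and the statement holds trivially; otherwise $n \ge r(i-1)+1 > r(i-1)$, so the pigeonhole hypothesis is satisfied and the induction closes.
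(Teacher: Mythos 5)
Your proof is correct, and at its core it is the same counting argument as the paper's: the support of a defined Hadamard product of points is the intersection of the supports of the factors, so inclusion--exclusion bounds the number of zero coordinates of a product of $r$ points of $X$ by $r(i-1)$. Where you genuinely add something is the closure step. The paper's proof simply asserts that any point of $X^{\hadamardot r}$, being a limit of such products, again has at most $r(i-1)$ zero coordinates; but the number of zero coordinates can jump up in a limit (e.g.\ $[1:t]\to[1:0]$), so that assertion needs justification. Your compactness argument --- pass to subsequential limits $p_\infty,q_\infty$ of the factors, use the pigeonhole hypothesis $a+b\ge n+2$ to guarantee $p_\infty\hadamardot q_\infty$ is defined, and identify $s$ with $p_\infty\hadamardot q_\infty$ via convergent unit-norm lifts --- supplies exactly the missing justification, and the degenerate case $n\le r(i-1)$ is correctly dispatched because $\Delta_{n-ri+r-1}$ is then empty. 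The only cosmetic point is that the unit-norm lifts of $p_k$ converge only after passing to a further subsequence, which costs nothing. In short: same idea as the paper, organized as an induction on $r$ through a two-factor support lemma, with the limit argument that the paper leaves implicit actually carried out.
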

\begin{proof}

The hypothesis means that all points of $X$ have at most $i-1$ zero coordinates. By Remark \ref{limitremark} any point in $X^{\hadamardot r}$ is the limit of the Hadamard product of $r$ points in $X$. Thus, any point is $X^{\hadamardot r}$ has at most $ri-r$ zero coordinates. Hence, $X^{\hadamardot r}\cap\Delta_{n-ri+r-1}=\emptyset$.
\end{proof}

In the case of lines we have a stronger result.
\begin{Lem}\label{closurelem}
Let $n\geq 2$ and $n\geq r$. If $L\subset\PP^n$ is a line such that $L\cap\Delta_{n-2}=\emptyset$, then
\[L^{\hadamardot r}=\bigcup_{p_i\in L} p_1\hadamardot\ldots\hadamardot p_r,\]
that is, the closure operation is not necessary.
\end{Lem}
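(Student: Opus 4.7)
The plan is to verify that, under the hypothesis, the Hadamard product $p_1 \hadamardot \cdots \hadamardot p_r$ is defined for \emph{every} choice of $p_1, \ldots, p_r \in L$, and then invoke Remark \ref{limitremark} to conclude that no closure is needed.

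First I would translate the hypothesis: the condition $L \cap \Delta_{n-2} = \emptyset$ says every point of $L$ has at most one zero coordinate. The key combinatorial observation is then that for any $p_1, \ldots, p_r \in L$, the $i$-th coordinate of $p_1 \hadamardot \cdots \hadamardot p_r$ vanishes precisely when some $p_j$ has a zero in position $i$. Since each $p_j$ contributes at most one such bad index, at most $r$ of the $n+1$ coordinates can be zero. Because $n \geq r$, at least $n+1-r \geq 1$ coordinates are nonzero, so the product is defined.

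Next I would proceed by induction on $r$ in the binary definition $L^{\hadamardot r} = L \hadamardot L^{\hadamardot (r-1)}$. The base case $r=1$ is trivial. For the step, Lemma \ref{numberofzeroes} applied with $i=2$ gives $L^{\hadamardot (r-1)} \cap \Delta_{n-r} = \emptyset$, so every point of $L^{\hadamardot (r-1)}$ has at most $r-1$ zero coordinates. Combined with $p \in L$ having at most one zero, the same counting shows that $p \hadamardot q$ is defined for every $p \in L$ and $q \in L^{\hadamardot (r-1)}$. Therefore the projection $\pi$ from $\Lambda$ used to define $L \hadamardot L^{\hadamardot (r-1)}$ is a morphism on the entire product $L \times L^{\hadamardot (r-1)}$, and Remark \ref{limitremark} (via \cite[Corollary 14.2]{Eis}) lets me drop the closure. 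Combining with the inductive description of $L^{\hadamardot (r-1)}$ yields the claimed equality.

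The only real content is the zero-counting bound $r \leq n$; everything else is bookkeeping inside the inductive definition of the Hadamard power, and I do not anticipate a genuine obstacle.
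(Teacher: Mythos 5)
Your proposal is correct and matches the paper's (one-line) proof, which simply observes that $\pi$ is everywhere defined and invokes Remark \ref{limitremark}; you have supplied exactly the zero-counting argument (at most one zero coordinate per factor, hence at most $r\leq n$ zeros among $n+1$ coordinates) that justifies this. The inductive bookkeeping via Lemma \ref{numberofzeroes} is fine, though you could equally get the bound on zeros of points of $L^{\hadamardot(r-1)}$ directly from the inductive hypothesis.
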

\begin{proof} We use Remark \ref{limitremark} and we note that the map $\pi$ is everywhere defined.
\end{proof}

\begin{Ex} For contrast, consider the curve $X\subset \PP^2$ defined by $x_0x_1-x_2^2=0$ and the point $p=[1:0:2]$.
The Hadamard product $p\hadamardot X$ is the line $x_1=0$. However the point $[0:0:1]$ cannot be obtained as $p\hadamardot x$, with $x\in X$.
\end{Ex}

We now prove the analogue of Terracini's Lemma for Hadamard products using tangent spaces; recall that for a variety $X$ and a point $p\in X$, $T_p(X)$ denotes the {\em tangent space} to $X$ at $p$.
\begin{Lem}\label{Terracini} Consider varieties $X,Y\subset\PP^n$. If  $p\in X$ and $q\in Y$  are general points, then
\[T_{p\hadamardot q}(X\hadamardot Y)=\langle p\hadamardot T_ q (Y),q\hadamardot T_ p (X)  \rangle.\]
Moreover, if $p_1,\ldots,p_r\in X$ are general points and $p_1\hadamardot\ldots\hadamardot p_r\in X^{\hadamardot r}$ is a general point, then
\[T_{p_1\hadamardot\ldots\hadamardot p_r}(X^{\hadamardot r})=\langle p_2\hadamardot\ldots\hadamardot p_r\hadamardot T_ {p_1} (X),\ldots,p_1\hadamardot\ldots\hadamardot p_{r-1}\hadamardot T_ {p_{r}} (X)  \rangle.\]
\end{Lem}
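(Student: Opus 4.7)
The plan is to exploit the definition of $X \hadamardot Y$ as the closure of the image of the rational parametrization
\begin{equation*}
\phi \colon X \times Y \dashrightarrow \PP^n, \qquad (a,b) \mapsto a \hadamardot b,
\end{equation*}
and to compute its differential at $(p,q)$. By Definition \ref{projectiondef}, $\phi$ is the restriction to $X\times Y$ of the projection $\pi$ from $\Lambda$, so $\phi$ is regular at $(p,q)$ exactly when $p\hadamardot q$ is defined. Reducing to the irreducible case, $\phi$ is dominant onto $X\hadamardot Y$; since we work over $\CC$, generic smoothness ensures that at a general pair $(p,q)$ where $\phi$ is regular, the differential $d\phi_{(p,q)}$ is surjective onto the tangent space of the image. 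Consequently $T_{p\hadamardot q}(X\hadamardot Y)$ equals the image of $d\phi_{(p,q)}$, which we now identify.

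To compute the differential, choose affine lifts $p,q\in\CC^{n+1}$ and represent the affine tangent cones by linear subspaces $\widetilde{T}_p(X),\widetilde{T}_q(Y)\subset\CC^{n+1}$. The Leibniz-type expansion
\begin{equation*}
(p+\epsilon u)\hadamardot(q+\epsilon v)=p\hadamardot q+\epsilon\bigl(u\hadamardot q+p\hadamardot v\bigr)+O(\epsilon^2)
\end{equation*}
gives $d\phi_{(p,q)}(u,v)=u\hadamardot q+p\hadamardot v$. Its image in $\CC^{n+1}$ is the Minkowski sum $q\hadamardot\widetilde{T}_p(X)+p\hadamardot\widetilde{T}_q(Y)$, which projectivizes to the span $\langle p\hadamardot T_q(Y),\,q\hadamardot T_p(X)\rangle$, proving the first assertion.

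For the iterated statement I would induct on $r$, using Lemma \ref{assoc} to write $X^{\hadamardot r}=X\hadamardot X^{\hadamardot(r-1)}$. Applying the binary case at the pair $(p_1,\,p_2\hadamardot\cdots\hadamardot p_r)$ yields two contributions to $T_{p_1\hadamardot\cdots\hadamardot p_r}(X^{\hadamardot r})$: the subspace $(p_2\hadamardot\cdots\hadamardot p_r)\hadamardot T_{p_1}(X)$ and the subspace $p_1\hadamardot T_{p_2\hadamardot\cdots\hadamardot p_r}(X^{\hadamardot(r-1)})$. Substituting the inductive description of the second factor and distributing Hadamard multiplication by $p_1$ through the span produces the claimed expression.

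The principal obstacle is handling the generality hypothesis rigorously. One must verify that (i) $\phi$ is regular at a generic $(p,q)$; (ii) $d\phi_{(p,q)}$ attains the expected rank $\dim(X\hadamardot Y)$ generically; and (iii) in the iterated case, one can choose $p_1,\ldots,p_r\in X$ jointly general so that both $p_1\hadamardot\cdots\hadamardot p_r$ is a general point of $X^{\hadamardot r}$ and $p_2\hadamardot\cdots\hadamardot p_r$ is a general point of $X^{\hadamardot(r-1)}$. All three conditions follow from the irreducibility of products and dominance of the multiplication map, but they deserve explicit mention rather than being swept under the rug.
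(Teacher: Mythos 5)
Your proposal is correct and takes essentially the same approach as the paper: both reduce to the binary case, invoke generic smoothness over $\CC$ to identify $T_{p\hadamardot q}(X\hadamardot Y)$ with the image of the differential of the product parametrization, and compute that differential by the Leibniz rule. The paper phrases this via local analytic parametrizations $p(x)\hadamardot q(y)$ and differentiation along curves, but this is the same computation, and your explicit attention to the genericity conditions only makes the argument more careful.
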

\begin{proof}
It is enough to prove the result for $X$ and $Y$. Since $p$ and $q$ are generic, by generic smoothness, we can find a parametrization of $X$, respectively of $Y$, in an analytic neighbourhood of $p$, respectively of $q$. Let $p(x)$ with $p(0)=p$, respectively $q(y)$ with $q(0)=q$, be such a parametrization. Thus, a parameterization of $X\hadamardot Y$ around $p\hadamardot q$, using our base field is $\CC$ and Proposition 10.6 in \cite{Hart}, is given by
\[p(x)\hadamardot q(y).\]
Thus the tangent space $T_{p\hadamardot q}(X\hadamardot Y)$ is obtained by picking curves $x(t)$ and $y(t)$, taking the derivative with respect to $t$ at $t=0$ in the expression
\[p(x(t))\hadamardot q(y(t))\]
and making $x'(0)$ and $y'(0)$ vary.
The result follows.
\end{proof}

We note in passing that toric varieties, in the sense of \cite{S}, are closed under Hadamard products.

\begin{Rmk} Let $\mathcal{A} = (\textbf{a}_0, \ldots, \textbf{a}_n)$ be a $d \times (n+1)$ integer matrix in which all column sums are the same.  This defines $X \subset \PP^n$ as the closure of the image of the monomial map:

\begin{center}
$ (\CC^{\times})^{d} \longrightarrow \PP^n $

\hspace {2.4cm} $ \textbf{t} \longmapsto [\textbf{t}^{\textbf{a}_0} : \ldots : \textbf{t}^{\textbf{a}_n}]$.
\end{center}

\noindent Let $\mathcal{B}$ be an $e \times (n+1)$ integer matrix similarly defining $Y \subset \PP^n$.  Then $X \hadamardot Y$ is defined by the $(d+e) \times (n+1)$ integer matrix which is the vertical concatenation of $\mathcal{A}$ and $\mathcal{B}$.
\end{Rmk}

It would be interesting to study Hadamard products of \textit{T-varieties with low complexity} (\cite {IS}) in future work.

\vspace{.2cm}

\section{Powers of a line}

In this section, our aim is to give a direct and elementary proof by projective geometry that Hadamard powers of a line are linear spaces.  This is achieved in Theorem \ref{mainthmLr}, and made explicit in Proposition \ref{Plucker}.

\subsection{Multiplying by a point}
We begin with useful results about multiplying linear spaces by a point, which amounts to projecting into a coordinate subspace, then acting by an element of that subspace's torus.
\begin{Lem}\label{pLLem}
Let $L\subset\PP^n$ be a linear space of dimension $m$. If $p\in\PP^n$, then $p\hadamardot L$ is either empty or it is a linear space of dimension at most $m$. If $p\not\in\Delta_{n-1}$, then $\dim( p\hadamardot L)=m$.
\end{Lem}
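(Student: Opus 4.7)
The plan is to factor the map $\phi_p\colon q \mapsto p \hadamardot q$ as a linear projection followed by a projective linear automorphism, so that the linearity and the dimension bound for $p\hadamardot L$ will follow from standard facts about projections of linear subspaces.

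Write $p = [a_0:\ldots:a_n]$ and let $S = \{i : a_i = 0\}$. Let $V_{S^c} \subset \PP^n$ denote the coordinate subspace cut out by $x_i = 0$ for $i \notin S$, and let $V_S$ denote the complementary coordinate subspace cut out by $x_i = 0$ for $i \in S$. A direct inspection of coordinates shows that $\phi_p$ is undefined precisely on $V_{S^c}$ (since for $q \in V_{S^c}$, every entry $a_ib_i$ vanishes), and that on $\PP^n \setminus V_{S^c}$ it factors as $\phi_p = D \circ \pi$, where $\pi\colon \PP^n \dashrightarrow V_S$ is the linear projection with center $V_{S^c}$ (just ``forget the coordinates indexed by $S$'') and $D\colon V_S \to V_S$ is the projective linear automorphism given by diagonal scaling with entries $a_i$ for $i\notin S$.

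If $L \subseteq V_{S^c}$, then $\phi_p$ is undefined on all of $L$, so $p \hadamardot L = \emptyset$. Otherwise $L \setminus (L \cap V_{S^c})$ is open and dense in $L$. Since a linear projection sends a linear subspace to a linear subspace, $\pi(L)$ is a linear subspace of $V_S$ of dimension at most $m$, and composing with the isomorphism $D$ yields the linear subspace $D(\pi(L))$ of the same dimension. Linear subspaces of $\PP^n$ are Zariski closed, so the closure in the definition of the Hadamard product contributes nothing, giving $p \hadamardot L = D(\pi(L))$, linear of dimension at most $m$.

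For the final assertion, if $p \notin \Delta_{n-1}$ then $S = \emptyset$, hence $V_{S^c} = \emptyset$ and $\phi_p = D$ is a globally defined linear automorphism of $\PP^n$. Then $p \hadamardot L$ is projectively isomorphic to $L$, so $\dim(p \hadamardot L) = m$. The only delicate point is the bookkeeping around the indeterminacy locus $V_{S^c}$; the geometric content reduces to the standard observation that linear projections of linear subspaces are linear.
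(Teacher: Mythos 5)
Your proof is correct and takes essentially the same approach as the paper's: both factor multiplication by $p$ into a coordinate projection (away from the coordinates where $p$ vanishes) composed with a diagonal torus automorphism, reducing the claim to the facts that linear projections preserve linearity and that the projection is an isomorphism when $p\not\in\Delta_{n-1}$. The only cosmetic difference is the order of composition (the paper scales by a lift of $p$ first and then projects, you project first and then scale), which changes nothing.
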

\begin{proof}
We first consider the case $p\not\in\Delta_{n-1}$, that is, $p$ has no coordinate equal to zero. We can describe $L$ as the solution set of a linear system of equations of the form $M\mathbf{x}=0$.
Let $D_p$ be the diagonal matrix with entries a choice of coordinates of $p$ and consider the matrix $M'=M{D_p}^{-1}$. If $q\in L$, then $p\hadamardot q$ solves $M'\mathbf{x}=0$. Conversely, if $z$ solves $M'\mathbf{x}=0$, then $z=p\hadamardot q$ for some $q\in L$. Thus, we have
\[\bigcup_{q\in L}p\hadamardot q=\{\mathbf{x}:M'\mathbf{x}=0\}.\]
Since linear spaces are closed in the Zariski topology, it follows from Remark \ref{Hadamrdasunion} that $p\hadamardot L=\{\mathbf{x}:M'\mathbf{x}=0\}$. This completes the proof since $M$ and $M'$ have the same rank.

Now we consider the case $p\in\Delta_{i}\setminus\Delta_{i-1}$ for $i<n$, that is, $p$ has exactly $n-i$ coordinates equal to zero. This means that we can find $i+1$ coordinates points spanning a linear space $\Sigma$ such that $p\in\Sigma$; let $\Lambda$ be the linear span of the remaining $n-i$ coordinate points. Now consider the projection map from $\Lambda$
$$\pi:\PP^n\dashrightarrow\Sigma.$$
Choose $q\in\pi^{-1}(p)\setminus\Delta_{n-1}$.
The conclusion follows since $p\hadamardot L=\pi(q\hadamardot L)$.
\end{proof}

\begin{Lem}\label{pqLLem}
Let $L\subset\PP^n$ be a linear space of dimension $m<n$ and consider points $p,q\in\PP^n\setminus\Delta_{n-1}$. If $p\neq q$, $L\cap\Delta_{n-m-1}=\emptyset$, and $\langle p,q\rangle\cap\Delta_{n-m-2}=\emptyset$, then $p\hadamardot L\neq q\hadamardot L$.
\end{Lem}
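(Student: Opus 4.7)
The plan is to assume $p \hadamardot L = q \hadamardot L$ and derive a contradiction. Since $p, q \notin \Delta_{n-1}$, both points have all nonzero coordinates, so Hadamard multiplication by $p$ and by $q$ acts on $\CC^{n+1}$ as the invertible diagonal matrices $D_p, D_q$. The assumed equality is therefore equivalent to $D_r(\hat{L}) = \hat{L}$ on the affine cone, where $r = [p_0/q_0 : \cdots : p_n/q_n]$ has all coordinates nonzero and, since $p \neq q$ in $\PP^n$, is not the point $[1 : \cdots : 1]$.

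Partition $\{0, \ldots, n\}$ into blocks $B_1, \ldots, B_k$ on which the entries of $r$ agree, with $k \geq 2$; set $V_j = \mathrm{span}\{e_i : i \in B_j\}$, so that $V_1, \ldots, V_k$ are the eigenspaces of $D_r$ with pairwise distinct eigenvalues. A Vandermonde argument (iterating $D_r$ on a vector) shows every $D_r$-invariant subspace decomposes across eigenspaces, giving $\hat{L} = \bigoplus_{j=1}^{k} (\hat{L} \cap V_j)$. Writing $d_j = \dim(\hat{L} \cap V_j)$ and $b_j = |B_j|$, we have $\sum d_j = m+1$ and $\sum b_j = n+1$, and each nonzero summand contributes a projective subspace $\PP(\hat{L} \cap V_j) \subset L$ of dimension $d_j - 1$ sitting inside the coordinate subspace $\PP(V_j) \cong \PP^{b_j - 1}$.

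The two hypotheses now translate to linear inequalities on $b_j$ and $d_j$. A point of $\PP(V_j)$ automatically has $n+1 - b_j$ zero coordinates (those outside $B_j$), so the condition $L \cap \Delta_{n-m-1} = \emptyset$ forces $\PP(\hat{L} \cap V_j)$ to avoid the stratum of $\PP(V_j)$ of points with at most $n-m$ nonzero coordinates in $B_j$; the projective dimension theorem then yields $d_j \leq b_j - (n-m)$ whenever $d_j \geq 1$. Likewise, the condition $\langle p, q \rangle \cap \Delta_{n-m-2} = \emptyset$ forces $b_j \leq m+1$ for every block, since the unique point on $\langle p, q \rangle$ whose $i$-th coordinate vanishes has zeros precisely in positions $B_j$ when $i \in B_j$. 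Summing the first bound over $S = \{j : d_j \geq 1\}$ gives $m+1 \leq (n+1) - |S|(n-m)$, so $|S| \leq 1$; combined with $\hat{L} \neq 0$, $|S| = 1$ and $\hat{L} \subset V_1$. Then $L \subset \PP(V_1)$ forces $b_1 \geq m+1$, while $b_1 \leq m+1$ from the second bound gives $L = \PP(V_1)$, which contains the coordinate points $e_i$ for $i \in B_1$. These lie in $\Delta_{n-m-1}$, contradicting the hypothesis on $L$. The main technical step is the dimension bound $d_j \leq b_j - (n-m)$; everything else is bookkeeping with projective dimensions.
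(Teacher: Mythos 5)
Your proof is correct, and it takes a genuinely different route from the paper's. The paper first settles the codimension-one case directly: writing $L:\sum_i\alpha_ix_i=0$, the hypothesis $L\cap\Delta_0=\emptyset$ forces every $\alpha_i\neq 0$, and $p\hadamardot L=q\hadamardot L$ would force all ratios $a_i/b_i$ to coincide, contradicting $p\neq q$; the general case is then reduced to this one by projecting away $n-m-1$ coordinates from a span $\Sigma\subset\Delta_{n-m-2}$ of coordinate points, the two hypotheses being exactly what is needed for $L$ to stay $m$-dimensional, for $p,q$ to stay distinct, and for the image hyperplane to avoid $\Delta_0$. You instead rephrase $p\hadamardot L=q\hadamardot L$ as invariance of the affine cone $\hat L$ under the diagonal operator $D_r$ with $r=p/q$, and use the eigenspace decomposition $\hat L=\bigoplus_j(\hat L\cap V_j)$ over the level sets of $r$. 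All the key steps check out: the bound $d_j\le b_j-(n-m)$ for $d_j\ge 1$ is the correct application of the projective dimension theorem inside $\PP(V_j)$, the identification of the points of $\langle p,q\rangle$ with some vanishing coordinate as having zero set exactly one block $B_j$ is right and gives $b_j\le m+1$, and the bookkeeping forces $|S|=1$ and the final contradiction. What your approach buys is a projection-free argument that isolates exactly how each hypothesis enters (the first controls $d_j$ against $b_j$, the second controls $b_j$) and the stronger structural fact that a $D_r$-invariant $L$ must split compatibly with the level sets of $p/q$; the paper's proof is shorter. Two minor remarks: in the edge case $m=n-1$ the hypothesis involves $\Delta_{-1}=\emptyset$ and your bound $b_j\le m+1=n$ really comes from $k\ge 2$ rather than from that hypothesis; and your endgame can be shortened, since $|S|=1$ together with $m+1=d_1\le b_1-(n-m)$ already forces $b_1\ge n+1$, i.e.\ $k=1$, contradicting $k\ge 2$.
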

\begin{proof}
Let $p=[a_0: \dots: a_n]$ and $q=[b_0: \dots: b_n]$. Let $t=n-m$. We prove the result by contradiction dealing with the case $t=1$ and then using a projection argument.

\noindent{\bf Case $t=1$.} In this situation, $L$ is a hyperplane.  We consider its equation
\[L: \sum_{i=0}^{n} \alpha_i x_i=0.\]
Direct computations show that
\[p\hadamardot L:\sum_{i=0}^{n} \frac{\alpha_i}{a_i} x_i=0 \: \: \mbox{ and } \: \:q\hadamardot L:\sum_{i=0}^{n} \frac{\alpha_i}{b_i} x_i=0.\]
Since $L\cap\Delta_{0}=\emptyset$ all the coefficients $\alpha_i$ are non-zero. The assumption $p\hadamardot L= q\hadamardot L$ yield that $\frac{a_i}{b_i}=\frac{a_j}{b_j}$ for all $i,j$. This is a contradiction as $p\neq q$.

\noindent{\bf Case $t>1$.} Choose $t-1$ coordinate points in $\Delta_0$ and denote with $\Sigma$ their linear span. Note that $\Sigma\subset\Delta_{t-2}$. Let $\pi$ be the projection from $\Sigma$
\[\pi:\PP^n\dashrightarrow\PP^{n-t+1},\]
that is, $\pi$ forgets $t-1$ coordinates. We set
 \[p'=\pi(p),q'=\pi(q)\mbox{ and }L'=\pi(L).\]
Since $L\cap\Sigma\subset L\cap\Delta_{t-1}\subset L\cap\Delta_{n-m-1}=\emptyset$, $L'$ is a linear space of dimension $m$. Similarly, $p'\neq q'$ since $\langle p,q\rangle\cap\Delta_{t-2}=\emptyset.$
Since $\pi$ simply forgets $t-1$ coordinates, we have
\[p'\hadamardot L'=\pi(p\hadamardot L)\mbox{ and }q'\hadamardot L'=\pi(q\hadamardot L).\]

\noindent Now, we show $p'\hadamardot L'\neq q'\hadamardot L'$, and hence $p\hadamardot L\neq q\hadamardot L$.
To conclude using the case $t=1$, we have to check the following in $\mathbb{P}^{m+1}$:
\begin{enumerate}
\item\label{firstonc} $L'\cap\Delta_{0}=\emptyset$.
\item\label{secondcond}$p',q'\not\in\Delta_{m}=\emptyset$.
\end{enumerate}
Since $p\notin  \Delta_{n-1}$, clearly $p'\notin \Delta_m$, and similarly $q'\notin \Delta_m$. Thus \eqref{secondcond} is proved. If $z'\in L'\cap\Delta_0$, then we can choose $z\in L$ such that $\pi(z)=z'$. Thus, $z\in\Delta_{t-1}=\Delta_{n-m-1}$, a contradiction. Hence \eqref{firstonc} is proved and the proof is now completed.
\end{proof}

\vspace{.1cm}

\subsection{Powers of a line}

The case of the powers of a line is special.  The main reason is that two dimension counts agree in the proof of \ref{mainthmLr}.

\begin{Lem}\label{linenoclosurelem}
Let $n\geq 2, \, n\geq r$ and $L\subset\mathbb{P}^n$ be a line such that $L\cap\Delta_{n-2}=\emptyset$.  There exist $p\neq q$ in $L\setminus\Delta_{n-1}$, and for any such: $z\in L$ implies $z\hadamardot L^{\hadamardot r} \subset \langle p\hadamardot L^{\hadamardot r}, \, q\hadamardot L^{\hadamardot r}\rangle$.

\end{Lem}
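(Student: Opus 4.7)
The plan is to produce $p,q$ by a simple counting argument, and then exploit the fact that Hadamard multiplication by a fixed point is linear in the other argument.

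First I would show the existence of $p \neq q$ in $L \setminus \Delta_{n-1}$. The hypothesis $L \cap \Delta_{n-2} = \emptyset$ means no point of $L$ has two or more zero coordinates. In particular $L \not\subset H_i$ for any coordinate hyperplane $H_i$: otherwise, since $n \geq 2$, intersecting $L$ with a second coordinate hyperplane would produce a point with two zero coordinates, contradicting the hypothesis. Hence each $L \cap H_i$ is a single point, so $L \cap \Delta_{n-1}$ consists of at most $n+1$ points, and two distinct points $p, q \in L \setminus \Delta_{n-1}$ exist.

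Next, the core step. Fix such $p, q$ and choose affine representatives $p = (a_0, \ldots, a_n)$, $q = (b_0, \ldots, b_n)$ with all coordinates nonzero. For any $z \in L$, there are scalars $\alpha, \beta \in \CC$ with affine representative $z = \alpha p + \beta q$. By Lemma \ref{closurelem}, every point of $L^{\hadamardot r}$ is an honest Hadamard product $w = \ell_1 \hadamardot \cdots \hadamardot \ell_r$, so no closure is needed on the right. A coordinatewise computation then yields the vector identity
\[ z \hadamardot w \;=\; \alpha\,(p \hadamardot w) \,+\, \beta\,(q \hadamardot w). \]
Because $p$ and $q$ have no zero coordinates, $p \hadamardot w$ and $q \hadamardot w$ are well-defined nonzero points of $\PP^n$, belonging respectively to $p \hadamardot L^{\hadamardot r}$ and $q \hadamardot L^{\hadamardot r}$. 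Whenever $z \hadamardot w$ is itself defined, the displayed identity places $z \hadamardot w$ in the projective span $\langle p \hadamardot L^{\hadamardot r}, q \hadamardot L^{\hadamardot r}\rangle$.

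Finally, to pass from this pointwise assertion to the required containment of varieties, I would invoke Definition \ref{projectiondef}: the variety $z \hadamardot L^{\hadamardot r}$ is the Zariski closure of the defined products $z \hadamardot w$ as $w$ varies in $L^{\hadamardot r}$, while linear spans are Zariski closed, so containment extends across the closure. I do not expect a serious obstacle here; the whole argument is essentially the $\CC$-linearity of Hadamard multiplication by a fixed $w$ in its other argument. The only mildly delicate point is bookkeeping around possibly undefined Hadamard products on the right-hand side — and this is precisely what choosing $p, q \notin \Delta_{n-1}$ is designed to eliminate, since then $p \hadamardot w$ and $q \hadamardot w$ are always defined for every $w \in \PP^n$.
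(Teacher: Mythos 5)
Your proposal is correct and follows essentially the same route as the paper: choose two points of $L$ with no zero coordinates as a vector-space basis, write $z=\alpha p+\beta q$, use the coordinatewise identity $z\hadamardot w=\alpha(p\hadamardot w)+\beta(q\hadamardot w)$, and handle the closure via Lemma \ref{closurelem} together with the fact that linear spans are Zariski closed. Your explicit counting argument for the existence of $p\neq q$ in $L\setminus\Delta_{n-1}$ just fills in a step the paper states without proof.
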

\begin{proof}
Choose a vector space basis of $L$ made of points with no zero coordinates. One exists, since $L\cap\Delta_{n-2}=\emptyset$. Hence $z$ is a linear combination of $p$ and $q$ and:
\[
z \hadamardot \bigcup_{z_i\in L} z_1\hadamardot\ldots\hadamardot z_r \, \, \subset \, \, \langle p\hadamardot L^{\hadamardot r}, \, q\hadamardot L^{\hadamardot r}\rangle.
\]
We can note the LHS equals $L^{\hadamardot r}$ by Lemma \ref{closurelem}, or take closure of both sides.
\end{proof}

\begin{Thm}\label{mainthmLr}Let $L\subset\PP^n$, $n>1$, be a line. If $L\cap\Delta_{n-2}=\emptyset$,
then $L^{\hadamardot r}\subset\PP^n$ is a linear space of dimension $\min\{r,n\}$.
\end{Thm}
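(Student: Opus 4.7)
The plan is to prove the theorem by strong induction on $r$, with the base case $r = 1$ being immediate since $L$ is a line of dimension $1 = \min\{1,n\}$. For the inductive step, I would split according to whether $r \geq n$ or $r < n$. In the easy case $r \geq n$, the induction hypothesis gives $L^{\hadamardot r} = \PP^n$; since $L \cap \Delta_{n-2} = \emptyset$ forces $L \not\subset \Delta_{n-1}$, I can pick $p \in L$ with no zero coordinate, and then $p \hadamardot \PP^n = \PP^n$ because Hadamard product with a torus element is a projective automorphism, so $L^{\hadamardot (r+1)} = \PP^n$ as required.

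The substantive case is $r + 1 \leq n$. Setting $M := L^{\hadamardot r}$, which by IH is a linear space of dimension $r$, I pick distinct $p, q \in L \setminus \Delta_{n-1}$ as in Lemma \ref{linenoclosurelem}. By Lemma \ref{pLLem}, the spaces $p \hadamardot M$ and $q \hadamardot M$ are both linear of dimension $r$, and by Lemma \ref{pqLLem}, whose hypotheses are verified via Lemma \ref{numberofzeroes} together with $L \cap \Delta_{n-2} = \emptyset$, these two linear spaces are distinct. The crucial step is then to show that $N := \langle p \hadamardot M, \, q \hadamardot M \rangle$ has dimension exactly $r+1$. For this, I would observe that $(p \hadamardot M) \cap (q \hadamardot M)$ contains $p \hadamardot q \hadamardot L^{\hadamardot (r-1)}$, which by the inductive hypothesis at level $r-1$ together with two applications of Lemma \ref{pLLem} is a linear space of dimension $r-1$. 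Since two distinct $r$-dimensional linear spaces can meet in dimension at most $r-1$, the intersection has dimension exactly $r-1$, giving $\dim N = 2r - (r-1) = r+1$.

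To conclude, Lemma \ref{linenoclosurelem} applied at level $r$ (valid since $r \leq n-1$) yields $L^{\hadamardot (r+1)} \subset N$; conversely, the irreducible variety $L^{\hadamardot (r+1)}$ contains the two distinct $r$-dimensional linear spaces $p \hadamardot M$ and $q \hadamardot M$, so $\dim L^{\hadamardot (r+1)} \geq r + 1 = \dim N$. Irreducibility then forces $L^{\hadamardot (r+1)} = N$, a linear space of dimension $r+1 = \min\{r+1, n\}$. The main obstacle I anticipate is pinning down $\dim N = r+1$ exactly: merely knowing that $L^{\hadamardot (r+1)}$ sits inside some linear space is not sufficient for linearity without a matching dimension count, and the identification of the intersection piece $p \hadamardot q \hadamardot L^{\hadamardot (r-1)}$ inside $(p \hadamardot M) \cap (q \hadamardot M)$ is the geometric input that makes this dimension count work, and which is where the inductive hypothesis at the previous level enters the argument nontrivially.
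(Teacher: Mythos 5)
Your proof is correct, and the second half of your inductive step coincides with the paper's: both use Lemma \ref{linenoclosurelem} to trap $L^{\hadamardot (r+1)}$ inside $N=\langle p\hadamardot L^{\hadamardot r},\,q\hadamardot L^{\hadamardot r}\rangle$, and both compute $\dim N=r+1$ from the fact that the two distinct $r$-dimensional spaces (distinct by Lemma \ref{pqLLem}, applicable thanks to Lemma \ref{numberofzeroes}) share the $(r-1)$-dimensional piece $p\hadamardot q\hadamardot L^{\hadamardot (r-1)}$. Where you genuinely diverge is in the matching lower bound $\dim L^{\hadamardot (r+1)}\geq r+1$: the paper gets this from the Hadamard version of Terracini's Lemma (Lemma \ref{Terracini}), computing the tangent space at a general point $p_1\hadamardot\cdots\hadamardot p_{r+1}$ as the span of two distinct $r$-dimensional linear spaces meeting in dimension $r-1$; you instead observe that the irreducible variety $L^{\hadamardot (r+1)}$ contains the two distinct closed irreducible $r$-dimensional subvarieties $p\hadamardot L^{\hadamardot r}$ and $q\hadamardot L^{\hadamardot r}$, and an irreducible variety of dimension $d$ cannot properly contain even one closed irreducible subvariety of dimension $d$, let alone two distinct ones. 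Your route is more elementary --- it avoids the generic-smoothness and analytic-parametrization input behind Lemma \ref{Terracini} entirely and reuses the same incidence data already needed for the span computation --- while the paper's choice is partly expository, since it wants to exhibit the Hadamard--Terracini lemma in action. Your handling of $r\geq n$ (once $L^{\hadamardot r}=\PP^n$, multiply by a torus point of $L$) is also a clean variant of the paper's reduction. The one point worth stating explicitly in a final write-up is the irreducibility of $L^{\hadamardot (r+1)}$ (noted in Section 2 of the paper: Hadamard products of irreducible varieties are irreducible), since both your lower bound and your final identification $L^{\hadamardot (r+1)}=N$ rest on it.
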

\begin{proof}
It is enough to consider the case $r\leq n$. Indeed, if $r>n$, $\PP^n\supseteq L^{\hadamardot r}\supseteq p\hadamardot L^{\hadamardot n}$ for some $p\in L^{\hadamardot (r-n)}$ without zero coordinates. The result follows from Lemma \ref{pLLem}.

We will prove the statement by proving that $(i)$ $\dim( L^{\hadamardot r})=r$ and that $(ii)$ $\dim \langle L^{\hadamardot r}\rangle =r$. We proceed by induction on $r$.  The base case is $r=1$.


Now we assume the statement to hold for $r$ and we prove it for $r+1$. To prove $(i)$ we use Lemma \ref{Terracini}. Thus, we consider general points $p_1,\ldots,p_{r+1}\in L\setminus\Delta_{n-1}$ and we have
\[T_{p_1\hadamardot\ldots\hadamardot p_{r+1}}(L^{\hadamardot (r+1)})=\langle p_2\hadamardot\ldots\hadamardot p_{r+1}\hadamardot L,\ldots,p_1\hadamardot\ldots\hadamardot p_{r}\hadamardot L  \rangle\]
\[\qquad \qquad \qquad \qquad \qquad \: \: \: \: \: =\langle p_{r+1}\hadamardot T_{p_1\hadamardot\ldots\hadamardot p_{r}}(L^{\hadamardot r}),p_{1}\hadamardot T_{p_2\hadamardot\ldots\hadamardot p_{r+1}}(L^{\hadamardot r}) \rangle.\]
Using the inductive hypothesis we conclude that
\[T_{p_1\hadamardot\ldots\hadamardot p_{r+1}}(L^{\hadamardot (r+1)})=\langle p_{r+1}\hadamardot L^{\hadamardot r},p_{1}\hadamardot L^{\hadamardot r} \rangle.\]
Thus $(i)$ follows, since $p_{r+1}\hadamardot L^{\hadamardot r}$ and $p_{1}\hadamardot L^{\hadamardot r}$ are $r$ dimensional, distinct (by Lemma \ref{pqLLem} which we can apply because of Lemma \ref{numberofzeroes}) linear spaces intersecting in an $r-1$ dimensional linear space, namely $p_1\hadamardot p_{r+1}\hadamardot L^{\hadamardot r-1}$, so we conclude
\[\dim( L^{\hadamardot (r+1)})=r+1.\]
To prove $(ii)$ we use Lemma \ref{linenoclosurelem} which yields
\[\langle L^{\hadamardot (r+1)}\rangle=\langle \bigcup_{z\in L} z\hadamardot L^{\hadamardot r}\rangle\subset\langle p\hadamardot L^{\hadamardot r},q\hadamardot L^{\hadamardot r}\rangle\]
for $p\neq q$ and $p,q\in L\setminus\Delta_{n-1}$.
Again by Lemma \ref{numberofzeroes}, Lemma \ref{pLLem}, Lemma \ref{pqLLem}, and the inductive hypothesis we get that $p\hadamardot L^{\hadamardot r},q\hadamardot L^{\hadamardot r}$ are distinct linear spaces of dimension $r$ containing a common linear space of dimension $r-1$, namely $p\hadamardot q\hadamardot L^{\hadamardot (r-1)} $. Hence, $\dim (\langle L^{\hadamardot (r+1)}\rangle)=r+1$ and the proof is now completed.
\end{proof}

\begin{Rmk}
If $L\cap\Delta_{n-2}=\emptyset$ fails in Theorem \ref{mainthmLr}, $L^{\hadamardot r}$ is still linear, but possibly of lower dimension.  For example, consider the line $L\subset \PP^5$
\[
L: \begin{cases}
2x_0-x_1=0\\
x_1+3x_2-x_4=0\\
3x_2-x_3=0\\
16x_3-12x_4-3x_5=0.
\end{cases}
\]
In \texttt{Macaulay2} (\cite{M2}), we compute
\[
L^{\hadamardot 2}: \begin{cases}
9x_2-x_3=0\\
192x_1+64x_3-48x_4-9x_5=0\\
768x_0+64x_3-48x_4-9x_5=0.
\end{cases}
\]
\vspace{.1cm}
\[
L^{\hadamardot 3}: \begin{cases}
27x_2-x_3=0\\
8x_0-x_1=0
\end{cases}
\]
\vspace{.1cm}
\[
L^{\hadamardot 4}: \begin{cases}
81x_2-x_3=0\\
16x_0-x_1=0
\end{cases}
\]

\noindent and $\dim(L^{\hadamardot r})=3$ for all $r\geq 3$.
\end{Rmk}

\vspace{.1cm}

\subsection{Equations}

We now make Theorem \ref{mainthmLr} explicit, and express the Pl\"ucker coordinates (\cite[\textsection III.14]{MillS}) of $L^{\hadamardot r}$ in terms of the Pl\"ucker coordinates of the given line $L$ in $\mathbb{P}^n$.  From this, we get canonical equations for $L^{\hadamardot r}$.

\begin{Prop}\label{Plucker}
Let $L$ in $\mathbb{P}^{n}$, $n>1$, be a line with $L \cap \Delta_{n-2} = \emptyset$, and let $r<n$.

\begin{enumerate}[label={$(\arabic*)$}]

\item If $L = \textup{rowspan}\begin{pmatrix}
a_{00} & a_{01} & \dots & a_{0n}\\
a_{10} & a_{11} & \dots & a_{1n}
\end{pmatrix}$, then

$L^{\hadamardot r} = \textup{rowspan}\begin{pmatrix}
a_{00}^{r} & a_{01}^{r}  & \dots & a_{0n}^{r}\\
a_{00}^{r-1}a_{10} & a_{01}^{r-1}a_{11}  & \dots & a_{0n}^{r-1}a_{1n} \\
\dots & \dots & \dots & \dots \\
a_{10}^{r} & a_{11}^{r} & \dots & a_{1n}^{r}
\end{pmatrix}$.\\

\item In terms of Pl\"ucker coordinates, if $0 \le i_{0} < i_{1} < \dots < i_{r} \le n$, then

\textup{[}$i_{0}, i_{1}, \dots, i_{r}\textup{]}_{L^{\hadamardot r}} =  \displaystyle\prod_{0 \le j<k\le r} \textup{[}i_{j}, i_{k}\textup{]}_{L}$ .

\end{enumerate}

\end{Prop}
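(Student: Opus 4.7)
My plan for part $(1)$ is to parametrize $L^{\hadamardot r}$ directly and identify the resulting coordinates as linear combinations of the claimed rows. Writing $\mathbf{a}_p = (a_{p0},\dots,a_{pn})$ for $p=0,1$, a general point of $L$ has the form $s\mathbf{a}_0 + t\mathbf{a}_1$. For $p_i = s_i\mathbf{a}_0 + t_i\mathbf{a}_1 \in L$ with $i=1,\dots,r$, the $k$-th coordinate of $p_1 \hadamardot \cdots \hadamardot p_r$ is
$$ \prod_{i=1}^r(s_i a_{0k} + t_i a_{1k}) = \Bigl(\prod_{i=1}^r s_i\Bigr)\sum_{j=0}^{r} e_j(\lambda_1,\dots,\lambda_r)\, a_{0k}^{\,r-j} a_{1k}^{\,j},$$
where $\lambda_i = t_i/s_i$ and $e_j$ is the $j$-th elementary symmetric polynomial. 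Hence $p_1\hadamardot\cdots\hadamardot p_r$ lies in the rowspan of the matrix $M$ with $j$-th row $v_j = (a_{0k}^{r-j}a_{1k}^j)_{k=0}^n$. Since $(\lambda_1,\dots,\lambda_r) \mapsto (e_1,\dots,e_r)$ is surjective by Vieta's formulas, these points sweep a dense open subset of $\textup{rowspan}(M)$, so $L^{\hadamardot r} \subseteq \textup{rowspan}(M)$. Theorem~\ref{mainthmLr} gives $\dim L^{\hadamardot r} = r$, while $\textup{rowspan}(M)$ has projective dimension at most $r$; these force equality, establishing $(1)$.

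For part $(2)$, I compute $[i_0,\dots,i_r]_{L^{\hadamardot r}}$ as the maximal minor of $M$ on columns $i_0 < \cdots < i_r$, namely $\det\bigl(a_{0,i_k}^{r-j} a_{1,i_k}^{j}\bigr)_{0\le j,k \le r}$. Both sides of the claimed identity are homogeneous polynomials in the $a_{pq}$ of common degree $r(r+1)$, so by Zariski density I may assume all $a_{0,i_k}$ are nonzero. Factoring $a_{0,i_k}^{r}$ from the $k$-th column exposes a Vandermonde matrix in $\mu_k := a_{1,i_k}/a_{0,i_k}$, with determinant $\prod_{j<k}(\mu_k - \mu_j)$. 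Substituting $\mu_k - \mu_j = [i_j,i_k]_L / (a_{0,i_j} a_{0,i_k})$, and observing that each index $i_m$ occurs in exactly $r$ of the unordered pairs $\{i_j,i_k\}$ with $j<k$, one finds the accumulated denominator $\prod_m a_{0,i_m}^{r}$ cancels the factored-out prefix exactly, leaving $\prod_{0\le j<k\le r}[i_j,i_k]_L$.

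The only point needing real care is deducing equality (rather than mere containment) in $(1)$; this is handled cleanly by the dimension bound from Theorem~\ref{mainthmLr}. The remainder is a standard Vandermonde manipulation plus a cancellation count, and the Zariski density reduction in $(2)$ is legitimate because the hypothesis $L\cap\Delta_{n-2}=\emptyset$ in particular forbids $L$ from lying in any coordinate hyperplane, ensuring an open locus in parameter space where $a_{0,i_k}\neq 0$ for all $k$.
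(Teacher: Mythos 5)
Your proof is correct and follows essentially the same route as the paper's: part $(1)$ rests on the fact that $L^{\hadamardot r}$ is spanned by the products $p^{\hadamardot (r-j)} \hadamardot q^{\hadamardot j}$ (which you re-derive directly via elementary symmetric functions rather than citing Lemma \ref{linenoclosurelem}, adding a dimension count from Theorem \ref{mainthmLr} for equality), and part $(2)$ is the same Vandermonde evaluation, carried out by dehomogenizing and tracking the cancellation instead of quoting the homogenized Vandermonde formula. Both variations are sound.
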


\begin{proof}

By Lemma \ref{linenoclosurelem}, if $L$ is spanned by points $p$ and $q$, then $L^{\hadamardot r}$ is spanned by points $p^{\hadamardot r} \, , \, p^{\hadamardot (r-1)} \hadamardot q \, , \, \dots \, , \, q^{\hadamardot r}.$  Part (1) follows.  For (2): \\

\begin{center}

$\textup{[}i_{0}, i_{1}, \dots, i_{r}\textup{]}_{L^{\hadamardot r}} = \textup{det}\begin{pmatrix}
a_{0i_{0}}^{r} & a_{0i_{1}}^{r}  & \dots & a_{0i_{r}}^{r}\\
a_{0i_{0}}^{r-1}a_{1i_{0}} & a_{0i_{1}}^{r-1}a_{1i_{1}}  & \dots & a_{0i_{r}}^{r-1}a_{1i_{r}} \\
\dots & \dots & \dots & \dots \\
a_{1i_{0}}^{r} & a_{1i_{1}}^{r} & \dots & a_{1i_{r}}^{r}
\end{pmatrix}$. \end{center}

\noindent This is a homogenized Vandermonde determinant.  It equals:

\vspace{.2cm}

 \hspace{2.1cm}
$\displaystyle\prod_{0 \le j<k\le r} (a_{0i_{j}}a_{1i_{k}} - a_{0i_{k}}a_{1i_{j}}) \: \: \: = \displaystyle\prod_{0 \le j<k\le r} \textup{[}i_{j}, i_{k}\textup{]}_{L}.$
\end{proof}

The virtue of expressing a formula in terms of Pl\"ucker coordinates as above, and also as later on in this paper, is that Pl\"ucker coordinates form the most canonical coordinate system on a Grassmannian of linear spaces.

\begin{Cor}\label{lineareqn} In Proposition \ref{Plucker}, if $x_{0} \, , \, x_{1} \, , \, \dots \, , \, x_{n}$ are the homogeneous coordinates of $\mathbb{P}^{n}$, then $L^{\hadamardot r}$ is cut out by the maximal minors of

\begin{center}

$\begin{pmatrix}
a_{00}^{r} & a_{01}^{r}  & \dots & a_{0n}^{r}\\
a_{00}^{r-1}a_{10} & a_{01}^{r-1}a_{11}  & \dots & a_{0n}^{r-1}a_{1n} \\
\dots & \dots & \dots & \dots \\
a_{10}^{r} & a_{11}^{r} & \dots & a_{1n}^{r}\\
x_{0} & x_{1} & \dots & x_{n}
\end{pmatrix}$.

\end{center}

\noindent These are ${n+1 \choose r+2}$ linear equations with coefficients the Pl\"ucker coordinates of $L^{\hadamardot r}$ (up to sign).  For $r < n-1$, they are not a complete intersection.  But for $r = n-1$, there is just one equation, for the hyperplane $L^{\hadamardot (n-1)}:$

\begin{center}
$L^{\hadamardot (n-1)} \: \: : \: \: \displaystyle\sum_{i=0}^n \Big(  (-1)^{n+i} \displaystyle\prod_{\substack{0 \le j<k\le n \\ j, k \, \neq \, i}} \textup{[}j, k\textup{]}_{L} \: \Big) \, x_{i} \: \: = \: \: 0$.
\end{center}
\end{Cor}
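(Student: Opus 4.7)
The plan is to combine Proposition \ref{Plucker} with elementary linear algebra and a Laplace expansion. First, by Proposition \ref{Plucker}(1), the linear space $L^{\hadamardot r}$ equals the rowspan of the $(r+1) \times (n+1)$ matrix $A$ whose $(k,\ell)$-entry is $a_{0\ell}^{\,r-k} a_{1\ell}^{\,k}$. By Theorem \ref{mainthmLr} this rowspan is $r$-dimensional as a projective linear space, so $A$ has rank $r+1$. Consequently a point $[x_0:\cdots:x_n] \in \PP^n$ lies in $L^{\hadamardot r}$ if and only if the augmented $(r+2) \times (n+1)$ matrix $\tilde A$ (obtained by appending the row $(x_0,\ldots,x_n)$) has rank at most $r+1$, which, since the top block already has rank $r+1$, amounts to the vanishing of all $\binom{n+1}{r+2}$ maximal minors of $\tilde A$. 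This is exactly the first assertion of the corollary.

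Next, I would expand each maximal minor along its last row. Fixing columns $i_0 < i_1 < \cdots < i_{r+1}$, Laplace expansion yields a linear form $\sum_{j=0}^{r+1} (-1)^{(r+1)+j}\, x_{i_j}\, \det A_{\hat{\jmath}}$, where $A_{\hat{\jmath}}$ is the $(r+1) \times (r+1)$ submatrix of $A$ with columns $i_0,\ldots,\hat{i_j},\ldots,i_{r+1}$. By Proposition \ref{Plucker}(2), each such determinant is exactly the Plücker coordinate $\textup{[}i_0,\ldots,\hat{i_j},\ldots,i_{r+1}\textup{]}_{L^{\hadamardot r}}$, up to sign. This shows the $\binom{n+1}{r+2}$ equations have coefficients equal to the Plücker coordinates of $L^{\hadamardot r}$ (up to sign), as claimed.

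For the non-complete-intersection claim, observe that $L^{\hadamardot r}$ has codimension $n-r$ in $\PP^n$ by Theorem \ref{mainthmLr}, so a complete intersection would require exactly $n-r$ defining equations. For $r < n-1$, the elementary inequality $\binom{n+1}{r+2} > n-r$ holds, and so the given list is strictly redundant. For the top case $r = n-1$, there is precisely $\binom{n+1}{n+1} = 1$ maximal minor, namely the determinant of the $(n+1) \times (n+1)$ augmented square matrix; this single linear form defines the hyperplane $L^{\hadamardot (n-1)}$. Laplace expansion along its last row gives $\sum_{i=0}^n (-1)^{n+i} x_i\, M_i$, where $M_i$ is the determinant of $A$ with its $i$-th column deleted, and Proposition \ref{Plucker}(2) identifies $M_i$ with $\prod_{0 \le j < k \le n,\, j,k \ne i} \textup{[}j,k\textup{]}_L$. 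Substituting produces the displayed hyperplane equation.

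The only real obstacle is the bookkeeping: one must keep the sign conventions in the cofactor expansion consistent with the indexing convention that Proposition \ref{Plucker}(2) uses for Plücker coordinates, so that the signs $(-1)^{n+i}$ in the hyperplane equation come out exactly as stated. No deeper input is needed beyond Proposition \ref{Plucker}, the rank criterion for linear dependence of an appended row, and classical Laplace expansion.
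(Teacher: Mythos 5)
Your proposal is correct and is exactly the derivation the paper intends: the corollary is stated without proof as an immediate consequence of Proposition \ref{Plucker}, via the rank criterion for membership in a rowspan and Laplace expansion of the maximal minors along the appended row $(x_0,\ldots,x_n)$, with the cofactors identified as Pl\"ucker coordinates by Proposition \ref{Plucker}(2). Your counting argument for the non-complete-intersection claim ($\binom{n+1}{r+2} > n-r$ when $r<n-1$) and the sign bookkeeping $(-1)^{n+i}$ in the hyperplane case also match the statement.
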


\vspace{.2cm}

\section{Star Configurations}

In this section, we give an application of the Hadamard powers of a line: Theorem \ref{final}, which can be viewed as a simple construction of star configurations (\cite {CGVT, CVT, GHM}). Interest in star configurations has increased in recent years because of their extremal behaviour with respect to many open problems such as, for example, symbolic powers of ideals \cite{BH10}.

Given a finite set of points $Z\subset\PP^n$, we can consider the Hadamard powers $Z^{\hadamardot r}$. However, it is more interesting to only consider Hadamard products of distinct points in $Z$.

\begin{Def}
Let $Z\subset\PP^n$ be a finite set of points. The $r-$th {\em square-free} Hadamard power of $Z$ is
\[Z^{\sqfrhadamardot r}=\{p_1\hadamardot\ldots\hadamardot p_r : p_i\in Z\mbox{ and }p_i\neq p_j \mbox{ for }i\neq j\}.\]
\end{Def}

We want to describe the square-free Hadamard powers of a finite set of points. Thus, we begin with a useful technical result.

We will use the following notion.
{\begin{Def} Let $H_1,\ldots ,H_m\subset M\subset \PP^n$ be linear spaces such that $r=\dim M=\dim H_i+1$. The linear spaces $H_i$  are said to be in {\em linear general position in $M$} if, whenever $i_1, \dots , i_j$ are distinct:
\begin{itemize}
\item $\dim (H_{i_1}\cap\ldots\cap H_{i_j})=r-j$,\, for $j\leq r$.
\item $H_{i_1}\cap\ldots\cap H_{i_j}=\emptyset$,\, for $r<j$.
\end{itemize}
\end{Def}
\begin{Lem}\label{lingenpos}  Let $L\subset\PP^n$ be a line, let $p_i\in L\setminus\Delta_{n-1}, \,1\leq i\leq m$, be $m$ distinct points, and $r\leq n$.  Set $M=L^{\hadamardot r}$ and $H_i=p_i\hadamardot L^{\hadamardot(r-1)},1\leq i\leq m$. If $L\cap\Delta_{n-2}=\emptyset$, then whenever $i_1, \dots ,i_j$ are distinct:
\begin{itemize}
\item $H_{i_1}\cap\ldots\cap H_{i_j}=p_{i_1}\hadamardot\ldots\hadamardot p_{i_j}\hadamardot L^{\hadamardot(r-j)}, \mbox{ for }j\leq r$.
\item $H_{i_1}\cap\ldots\cap H_{i_j}=\emptyset, \mbox{ for }r< j$.
\end{itemize}
In particular, the linear spaces $H_i$ are in linear general position in $M$.
\end{Lem}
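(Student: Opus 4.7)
My plan is to parameterize $M = L^{\hadamardot r}$ by binary forms of degree $r$, which turns the intersection problem into a statement about unique factorization in $\CC[x,y]$. Fix a basis $\{e_1, e_2\}$ of $L$ inside $\CC^{n+1}$, with $e_1 = (a_{00}, \dots, a_{0n})$ and $e_2 = (a_{10}, \dots, a_{1n})$. By Lemma \ref{closurelem}, every point of $L^{\hadamardot r}$ has the form $z_1 \hadamardot \dots \hadamardot z_r$ with $z_i = s_i e_1 + t_i e_2 \in L$, whose $k$-th coordinate is $\prod_{i=1}^r (s_i a_{0k} + t_i a_{1k}) = f(a_{0k}, a_{1k})$ for the degree-$r$ form $f(x,y) = \prod_i (s_i x + t_i y)$. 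Since every degree-$r$ binary form over $\CC$ splits, this suggests the linear map
\[
\Phi : \PP(\mathrm{Sym}^r \CC^2) \longrightarrow \PP^n, \qquad f \longmapsto [f(a_{00}, a_{10}) : \dots : f(a_{0n}, a_{1n})],
\]
whose image is precisely $M$.

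The first key step is to verify that $\Phi$ is injective. The hypothesis $L \cap \Delta_{n-2} = \emptyset$ amounts to the statement that no two columns of $\begin{pmatrix} a_{00} & \dots & a_{0n} \\ a_{10} & \dots & a_{1n} \end{pmatrix}$ are proportional, so the $n+1$ points $[a_{0k} : a_{1k}] \in \PP^1$ are pairwise distinct. Since a nonzero binary form of degree $r$ has at most $r$ projective roots and $r \le n < n+1$, no nonzero $f$ lies in the kernel, so $\Phi$ is a linear isomorphism $\PP^r \to M$. Under this isomorphism, the subspace $H_i = p_i \hadamardot L^{\hadamardot (r-1)}$ pulls back to $h_i \cdot \mathrm{Sym}^{r-1} \CC^2 \subset \mathrm{Sym}^r \CC^2$, where $h_i(x,y) = u_i x + v_i y$ is the linear form corresponding to $p_i = [u_i e_1 + v_i e_2]$; indeed $\Phi(h_i g) = p_i \hadamardot \Phi_{r-1}(g)$, where $\Phi_{r-1}$ is the analogue of $\Phi$ for $L^{\hadamardot(r-1)}$. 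Because the $p_i$ are distinct points of $L$, the linear forms $h_1, \dots, h_m$ are pairwise non-proportional in $\CC[x,y]$.

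With this identification in place, the lemma reduces to a routine fact about $\CC[x,y]$: a form of degree $r$ is divisible by each of $h_{i_1}, \dots, h_{i_j}$ if and only if it is divisible by their product $h_{i_1} \cdots h_{i_j}$, by unique factorization and pairwise coprimality. For $j \le r$ this cuts out the linear subspace $h_{i_1} \cdots h_{i_j} \cdot \mathrm{Sym}^{r-j} \CC^2$ of projective dimension $r - j$, whose image under $\Phi$ is exactly $p_{i_1} \hadamardot \dots \hadamardot p_{i_j} \hadamardot L^{\hadamardot(r-j)}$. For $j > r$ the product $h_{i_1} \cdots h_{i_j}$ has degree exceeding $r$, so no nonzero degree-$r$ form is divisible by it and the intersection is empty. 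The linear general position statement in $M$ is then immediate from these dimension counts. The only delicate point in this plan is the injectivity of $\Phi$, which is handled cleanly by the elementary bound on roots of a binary form together with the hypothesis $r \le n$.
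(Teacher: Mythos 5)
Your proof is correct, and it takes a genuinely different route from the paper. The paper proves the lemma by induction on $j$: it writes $H_{i_1}\cap\ldots\cap H_{i_{j+1}}$ as the intersection of two translates $p_{i_1}\hadamardot V$ and $p_{i_{j+1}}\hadamardot V$ of a common linear space $V$, and then invokes its earlier machinery (Lemma \ref{pLLem} on multiplying a linear space by a point, Lemma \ref{pqLLem} on distinctness of such translates, Lemma \ref{numberofzeroes}, and Theorem \ref{mainthmLr}) to pin down the dimension of the intersection. You instead linearize the entire problem at once via the evaluation map $\Phi:\PP(\mathrm{Sym}^r\CC^2)\to\PP^n$ at the $n+1$ points $[a_{0k}:a_{1k}]\in\PP^1$; the hypotheses $L\cap\Delta_{n-2}=\emptyset$ and $r\leq n$ enter exactly once, to make these points pairwise distinct and to force injectivity of $\Phi$ via the bound on roots of a binary form, after which every assertion of the lemma becomes unique factorization and pairwise coprimality of the linear forms $h_i$ in $\CC[x,y]$. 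Your identifications ($\Phi(h_ig)=p_i\hadamardot\Phi_{r-1}(g)$, intersections commuting with the injective linear map, the image of $\Phi$ being closed so no Zariski closure is needed) all check out. What your approach buys: it is non-inductive, it simultaneously reproves Theorem \ref{mainthmLr} for $r\leq n$ and Corollary \ref{distinct} as byproducts, and it makes transparent why products of collinear points behave like divisors on $\PP^1$. What the paper's approach buys: its supporting lemmas (\ref{pLLem}, \ref{pqLLem}) are stated for linear spaces of arbitrary dimension and are reused elsewhere, whereas your argument is specific to lines, relying on the fact that every binary form over $\CC$ splits into linear factors.
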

\begin{proof} We will prove this for $j\leq r+1$ by induction, and that suffices.

The base case is $j=1$.  Now we assume the statement for $j\leq r$ and prove it for $j+1$.  Without loss of generality it is enough to show that:
\[H_1\cap\ldots\cap H_{j+1}=p_1\hadamardot\ldots\hadamardot p_{j+1}\hadamardot L^{\hadamardot(r-j-1)}.\]

By the inductive hypothesis:

\begin{itemize}
\item $H_1 \cap \ldots \cap H_j = p_1 \hadamardot \ldots \hadamardot p_j \hadamardot L^{\hadamardot (r-j)}$, and
\item $H_2 \cap \ldots \cap H_{j+1} = p_2 \hadamardot \ldots \hadamardot p_{j+1} \hadamardot L^{\hadamardot (r-j)}$.
\end{itemize}
Intersecting these, we get:
\[H_1 \cap \ldots \cap H_{j+1} = p_1 \hadamardot V \, \cap \, p_{j+1} \hadamardot V,\]
where $V = p_2 \hadamardot \ldots \hadamardot p_j  \hadamardot L^{\hadamardot (r-j)}$.
By Theorem \ref{mainthmLr} and Lemma \ref{pLLem}, $V$ is a linear space of dimension $r-j$.  By Lemma \ref{numberofzeroes}, $V \cap \Delta_{n-(r-j)-1} = \emptyset$, which means Lemma \ref{pqLLem} applies to give $p_1 \hadamardot V \neq p_j \hadamardot V$.  Combining with

\[p_1 \hadamardot V \, \cap \, p_{j+1} \hadamardot V \supset p_1\hadamardot\ldots\hadamardot p_{j+1}\hadamardot L^{\hadamardot(r-j-1)},\]

\noindent and Lemma \ref{pLLem} and Theorem \ref{mainthmLr}, we complete the induction.
\end{proof}

\begin{Cor}\label{distinct}
{Let $L\subset\PP^n$ be a line, $r\leq n$, and consider points $p_i,q_j \in L\setminus\Delta_{n-1}$ for $1\leq i,j\leq r$. If $L\cap\Delta_{n-2}=\emptyset$, and
\[\{p_1,\ldots,p_r\}\neq\{q_1,\ldots,q_r\},\]
then $p_1\hadamardot\ldots\hadamardot p_r\neq q_1\hadamardot\ldots\hadamardot q_r$.}
\end{Cor}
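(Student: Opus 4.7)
The plan is to argue by contradiction, directly invoking the emptiness clause of Lemma \ref{lingenpos}. Suppose that $p_1\hadamardot\ldots\hadamardot p_r = q_1\hadamardot\ldots\hadamardot q_r$ while $\{p_1,\ldots,p_r\}\neq\{q_1,\ldots,q_r\}$. Since the corollary sits in the square-free setting, the $p_i$ are pairwise distinct and so are the $q_j$; after relabeling, we may assume $p_1\notin\{q_1,\ldots,q_r\}$, which produces $r+1$ pairwise distinct points $p_1,q_1,\ldots,q_r$ in $L\setminus\Delta_{n-1}$.

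Next, I would apply Lemma \ref{lingenpos} to this family of $r+1$ points, forming the linear spaces $H_{p_1}=p_1\hadamardot L^{\hadamardot(r-1)}$ and $H_{q_j}=q_j\hadamardot L^{\hadamardot(r-1)}$ inside $M=L^{\hadamardot r}$. Because we are intersecting $r+1$ such hyperplanes of $M$ and $r<r+1$, the second clause of Lemma \ref{lingenpos} yields
\[ H_{p_1}\cap H_{q_1}\cap\ldots\cap H_{q_r}=\emptyset. \]
On the other hand, by Lemma \ref{closurelem} the Hadamard product of any $r-1$ points of $L$ is an honest (not merely limiting) point of $L^{\hadamardot(r-1)}$, so the common value $p_1\hadamardot\ldots\hadamardot p_r = q_1\hadamardot\ldots\hadamardot q_r$ can be written as $p_1\hadamardot(p_2\hadamardot\ldots\hadamardot p_r)\in H_{p_1}$ and also as $q_j\hadamardot(q_1\hadamardot\ldots\hadamardot q_{j-1}\hadamardot q_{j+1}\hadamardot\ldots\hadamardot q_r)\in H_{q_j}$ for every $j$. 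This point would then inhabit the supposedly empty intersection, yielding the sought contradiction.

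I do not expect a substantive obstacle in writing this out: the corollary is essentially the $j=r+1>r$ case of Lemma \ref{lingenpos} rephrased as a statement about products. The only bookkeeping is verifying the hypothesis of that lemma, namely that the chosen points lie in $L\setminus\Delta_{n-1}$ and are mutually distinct, both of which are immediate from the assumptions and the reduction step.
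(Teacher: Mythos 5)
Your proof is correct and takes essentially the same route as the paper: both arguments come down to the emptiness clause of Lemma \ref{lingenpos} applied to $r+1$ pairwise distinct points of $L\setminus\Delta_{n-1}$ (the paper singles out a $q_1$ not among the $p_i$ and intersects the hyperplanes $p_1\hadamardot L^{\hadamardot(r-1)},\ldots,p_r\hadamardot L^{\hadamardot(r-1)},q_1\hadamardot L^{\hadamardot(r-1)}$; you symmetrically single out $p_1$ against the $q_j$). The one caveat --- that the $p_i$ must be pairwise distinct and likewise the $q_j$ for Lemma \ref{lingenpos} to apply --- is an implicit assumption shared with the paper's own proof, consistent with the square-free setting in which the corollary is used.
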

\begin{proof}
Set $x=p_1\hadamardot\ldots\hadamardot p_r$ and $y=q_1\hadamardot\ldots\hadamardot q_r$.
Without loss of generality we may assume $q_1\neq p_i$ for $1\leq i\leq r$. Consider the linear spaces
\[p_{1}\hadamardot L^{\hadamardot(r-1)},\, \ldots \, , p_{r}\hadamardot L^{\hadamardot(r-1)}\]
and \[q_{1}\hadamardot L^{\hadamardot(r-1)}.\] They are in linear general position in $L^{\hadamardot r}$ by Lemma \ref{lingenpos}. So:
\begin{itemize}
\item $x=p_{1}\hadamardot L^{\hadamardot(r-1)}\cap\ldots\cap p_{r}\hadamardot L^{\hadamardot(r-1)}$, and
\item $\emptyset = q_{1}\hadamardot L^{\hadamardot(r-1)}\cap p_{1}\hadamardot L^{\hadamardot(r-1)}\cap\ldots\cap p_{r}\hadamardot L^{\hadamardot(r-1)}.$
\end{itemize}
From $y\in q_{1}\hadamardot L^{\hadamardot(r-1)}$, the result follows.
\end{proof}

\begin{Cor}\label{cardinality}
Let $L\subset\PP^n$ be a line, $Z\subset L$ be a set of $m$ points and $r\leq n$. If $L\cap\Delta_{n-2}=\emptyset$ and $Z\cap\Delta_{n-1}=\emptyset$, then $Z^{\sqfrhadamardot r}$ is a set of $\binom{m}{r}$ points.
\end{Cor}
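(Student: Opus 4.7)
The plan is to bound $|Z^{\sqfrhadamardot r}|$ from above and below by $\binom{m}{r}$, with both bounds following more or less directly from what has already been established.

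For the upper bound, I would observe that Hadamard product on $\PP^n$ (where defined) is commutative, because coordinate-wise multiplication in $\CC$ is commutative, and associative by Lemma \ref{assoc} specialized to points. Hence, for distinct points $p_1,\ldots,p_r \in Z$, the product $p_1 \hadamardot \cdots \hadamardot p_r$ depends only on the unordered $r$-subset $\{p_1,\ldots,p_r\} \subset Z$. Since $|Z|=m$, this gives at most $\binom{m}{r}$ distinct products, so $|Z^{\sqfrhadamardot r}| \leq \binom{m}{r}$.

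For the lower bound, I want to show that two distinct $r$-subsets of $Z$ yield distinct Hadamard products. This is precisely the content of Corollary \ref{distinct}, provided its hypotheses can be verified in our setting. The condition $L \cap \Delta_{n-2} = \emptyset$ is one of the hypotheses of the corollary we are proving. The remaining requirement is that each chosen point lies in $L \setminus \Delta_{n-1}$; this holds because $Z \subset L$ and $Z \cap \Delta_{n-1} = \emptyset$ by assumption. Therefore Corollary \ref{distinct} applies to any pair of distinct $r$-subsets of $Z$, giving $|Z^{\sqfrhadamardot r}| \geq \binom{m}{r}$.

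Combining the two bounds yields the desired equality. There is no real obstacle here: the proof is essentially a bookkeeping exercise that translates Corollary \ref{distinct} into a cardinality statement, and the short argument is mainly about verifying that the hypotheses of \ref{distinct} are inherited from those of \ref{cardinality}.
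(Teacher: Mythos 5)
Your proof is correct and follows the route the paper intends: the paper states Corollary \ref{cardinality} without proof, as an immediate consequence of Corollary \ref{distinct}, and your argument is exactly that consequence spelled out (upper bound from commutativity and associativity of the Hadamard product of points, lower bound from Corollary \ref{distinct}, whose hypotheses are inherited since $Z\subset L\setminus\Delta_{n-1}$). The hypothesis check is complete, including the implicit point that all products are defined because no point of $Z$ has a zero coordinate.
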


We now see that  $Z^{\sqfrhadamardot r}$ for collinear points is a star configuration.

\begin{Def} A set of $\binom{m}{r}$ points $\mathbb{X}\subset\PP^n$ is a {\em star configuration} if there exist linear spaces $H_1, \dots , H_m \subset M\subset\mathbb{P}^n$ such that:
\begin{itemize}
\item $r=\dim M = \dim H_i + 1$.
\item $H_i$ are in linear general position in $M$.
\item $\mathbb{X}=\bigcup_{1\leq i_1<\ldots< i_r\leq m} H_{i_1}\cap\ldots\cap H_{i_r}$.
\end{itemize}
\end{Def}

\begin{Thm}\label{final} Let $L\subset\PP^n$ be a line, $Z\subset L$ be a set of $m$ points and $r\leq \min\{m,n\}$. If $L\cap\Delta_{n-2}=\emptyset$ and $Z\cap\Delta_{n-1}=\emptyset$, then $Z^{\sqfrhadamardot r}$ is a star configuration in $M=L^{\hadamardot r}$.
\end{Thm}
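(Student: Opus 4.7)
The plan is to verify the star configuration definition on the nose, with the candidate data chosen in the obvious way. Write $Z = \{p_1, \ldots, p_m\}$ and take
\[
M \; = \; L^{\hadamardot r}, \qquad H_i \; = \; p_i \hadamardot L^{\hadamardot (r-1)} \quad (1 \leq i \leq m).
\]
Each $H_i$ is contained in $M$ by associativity (Lemma \ref{assoc}). By Theorem \ref{mainthmLr} (applicable since $r \leq n$ and $L \cap \Delta_{n-2} = \emptyset$), $M$ is linear of dimension $r$, and $L^{\hadamardot (r-1)}$ is linear of dimension $r-1$. Because $p_i \notin \Delta_{n-1}$, Lemma \ref{pLLem} gives $\dim H_i = r - 1$. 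So the first bullet of the star configuration definition, $r = \dim M = \dim H_i + 1$, holds.

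Next I would invoke Lemma \ref{lingenpos} for the points $p_1, \ldots, p_m$ in $L$: it says precisely that the $H_i$ are in linear general position in $M$, giving the second bullet. It also provides the key intersection formula: for any $1 \leq i_1 < \ldots < i_r \leq m$,
\[
H_{i_1} \cap \ldots \cap H_{i_r} \; = \; p_{i_1} \hadamardot \ldots \hadamardot p_{i_r} \hadamardot L^{\hadamardot (r-r)} \; = \; p_{i_1} \hadamardot \ldots \hadamardot p_{i_r},
\]
since $L^{\hadamardot 0} = [1:1:\ldots:1]$. Taking the union over all $r$-subsets of $\{1, \ldots, m\}$ yields exactly $Z^{\sqfrhadamardot r}$, which is the third bullet. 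Finally, Corollary \ref{cardinality} gives $|Z^{\sqfrhadamardot r}| = \binom{m}{r}$, so $Z^{\sqfrhadamardot r}$ has the correct cardinality for a star configuration.

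There is no real obstacle here, since Lemma \ref{lingenpos} already encodes all the combinatorics of the intersection lattice of the $H_i$'s. The only points to check are mild bookkeeping: that all the hypotheses of Lemma \ref{lingenpos} (namely $L \cap \Delta_{n-2} = \emptyset$, $p_i \in L \setminus \Delta_{n-1}$, $r \leq n$) are direct consequences of our assumptions $L \cap \Delta_{n-2} = \emptyset$, $Z \cap \Delta_{n-1} = \emptyset$, and $r \leq \min\{m, n\}$, and that the union of $\binom{m}{r}$ intersection points really has that many distinct elements—which is exactly Corollary \ref{distinct} (or the already cited Corollary \ref{cardinality}).
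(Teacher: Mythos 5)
Your proposal is correct and follows essentially the same route as the paper: both arguments take $H_i = p_i \hadamardot L^{\hadamardot(r-1)}$ inside $M = L^{\hadamardot r}$, invoke Lemma \ref{lingenpos} for linear general position and the intersection formula $H_{i_1}\cap\ldots\cap H_{i_r} = p_{i_1}\hadamardot\ldots\hadamardot p_{i_r}$, and use Corollary \ref{cardinality} for the count $\binom{m}{r}$. You merely spell out a few dimension checks (via Theorem \ref{mainthmLr} and Lemma \ref{pLLem}) that the paper leaves implicit.
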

\begin{proof}

Let $Z=\{p_1,\ldots,p_m\}$ and notice that $Z^{\sqfrhadamardot r}$ is a set of $\binom{m}{r}$ points by Corollary \ref{cardinality}. Using Lemma \ref{lingenpos} we see that the linear spaces
\[H_1=p_1\hadamardot L^{\hadamardot (r-1)},\ldots,H_m=p_m\hadamardot L^{\hadamardot (r-1)}\]
are in linear general position in $M = L^{\hadamardot r}$, and
\[H_{i_1}\cap\ldots\cap H_{i_r}= p_{i_1}\hadamardot\ldots\hadamardot p_{i_r}\]
 for all $1\leq i_1<\ldots<i_r\leq m$. The conclusion follows from the definitions of star configuration and square-free Hadamard product.
\end{proof}

\vspace{.2cm}

\section{Tropical connection}

To study Hadamard products of other linear spaces in Section 6, we will need some machinery from tropical geometry.  We now review the basics.

Given an irreducible variety $X \subset \PP^n$ not contained in $\Delta_{n-1}$, let $I \subset \CC [x_{0}^{\pm}, \ldots , x_{n}^{\pm}]$ be the defining ideal of $X - \Delta_{n-1} \subset (\CC^{*})^{n+1}/\CC^{*}$.  The set
\[
 \text{trop}(X)  = \overline {\{ \textbf{w} \in \RR^{n+1} / \RR \textbf{1}  :  \text{in}_{\textbf{w}}(I) \neq \langle 1 \rangle \}}
\]
is the \textit{tropicalization} of $X$ (\cite[Theorem 3.2.3]{MS}).  It is the support of a pure rational polyhedral subfan of the Gr\"obner fan (\cite[\textsection 8.4]{CLO}) of $I$, inside $\RR^{n+1} / \RR \textbf{1}$ (\cite[Theorem 3.3.5]{MS}).  That subfan has positive integer multiplicities attached to its facets:
\[
\text{mult}(\sigma) \, \, = \sum_{P \in \text{Ass}^{\text{min}}(\text{in}_{\textbf{w}}(I))} \text{mult}\big{(}P, \, \text{in}_{\textbf{w}}(I)\big{)}.
\]
Here $\textbf{w}$ is any point in the relative interior of $\sigma$ and the sum is over minimial associated primes $P$ of $\text{in}_{\textbf{w}}(I)$.  These multiplicites balance along ridges (\cite[Theorem 3.4.14]{MS}).  A lot of the geometry of $X$ can be recovered from trop($X$).

Tropical geometry provides powerful tools to study Hadamard products, because of the following connection.

\begin{Prop}\label{trop}
Let $X, Y \subset \PP^n$ be irreducible varieties. The tropicalization of the Hadamard product is the Minkowski sum of the tropicalizations:
\[
\textup{trop}(X \hadamardot Y) \, = \, \textup{trop}(X) + \textup{trop}(Y)
\]
as sets.
If also $X \times Y \dashrightarrow X \hadamardot Y$ is generically $\delta$ to $1$, then:
\[
\textup{trop}(X \hadamardot Y) \, = \, \frac{1}{\delta} \, \big {(} \textup{trop}(X) + \textup{trop}(Y) \big{)}
\]
as weighted balanced fans.
\end{Prop}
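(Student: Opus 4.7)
The plan is to realize $X \hadamardot Y$ as the image of $X \times Y$ under a homomorphism of algebraic tori and then invoke the standard machinery for tropicalizing such images. Set $T = (\CC^{*})^{n+1}/\CC^{*}$ and let $\mu: T \times T \to T$ be the multiplication morphism $([a],[b]) \mapsto [a_0 b_0 : \ldots : a_n b_n]$; its tropicalization (the induced map on cocharacter lattices) is the linear map $A:(u,v)\mapsto u+v$ on $\RR^{n+1}/\RR\mathbf{1} \oplus \RR^{n+1}/\RR\mathbf{1}$. By Remark \ref{Hadamrdasunion}, $(X \hadamardot Y) \cap T = \overline{\mu((X \cap T) \times (Y \cap T))}$, and tropicalization only depends on the torus part, so we may work entirely inside tori.

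For the set equality, I would combine two standard facts from \cite{MS}: the product formula $\textup{trop}(X \times Y) = \textup{trop}(X) \times \textup{trop}(Y)$, and the fact that tropicalization commutes with push-forward under morphisms of tori, i.e., $\textup{trop}(\overline{\alpha(Z)}) = A_\alpha(\textup{trop}(Z))$ as sets for any closed subvariety $Z$ of a torus and any homomorphism $\alpha$ with tropicalization $A_\alpha$. Applied to $\mu$, these give
\[
\textup{trop}(X \hadamardot Y) \, = \, A\bigl(\textup{trop}(X) \times \textup{trop}(Y)\bigr) \, = \, \textup{trop}(X) + \textup{trop}(Y).
\]

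For the weighted statement, I would appeal to the tropical push-forward theorem (\cite[\textsection 3.6]{MS}): when $\mu|_{X\times Y}$ is generically $\delta$-to-$1$,
\[
A_*\bigl(\textup{trop}(X) \times \textup{trop}(Y)\bigr) \, = \, \delta \cdot \textup{trop}(X \hadamardot Y)
\]
as weighted balanced polyhedral complexes in $\RR^{n+1}/\RR\mathbf{1}$. Unpacking the definition of $A_*$ under addition recovers the weighted Minkowski sum of $\textup{trop}(X)$ and $\textup{trop}(Y)$: each maximal cell $\rho$ collects contributions $[N : L_\sigma + L_\tau]\cdot\textup{mult}(\sigma)\cdot\textup{mult}(\tau)$ summed over transverse maximal cells $\sigma \subset \textup{trop}(X)$ and $\tau \subset \textup{trop}(Y)$ with $\rho$ in the relative interior of $\sigma+\tau$. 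Dividing by $\delta$ concludes the proof.

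The main technical obstacle is verifying that the abstract push-forward $A_*$ really agrees with the weighted Minkowski sum. This reduces to checking that $A$ restricts to an injection on each maximal product cell $\sigma \times \tau$ on which the linear spans of $\sigma$ and $\tau$ meet only at the origin (with the correct lattice index $[N : L_\sigma + L_\tau]$), and that contributions from distinct pairs that map into the same cell are accumulated coherently by $A_*$. Both are routine consequences of the push-forward formalism once it is set up.
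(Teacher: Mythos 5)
Your proposal is correct and is essentially the paper's argument: the paper simply cites \cite[Proposition 5.5.11]{MS} for the set-theoretic statement and \cite[Remark 5.5.2]{MS} for the multiplicities, and those results are proved in \cite{MS} exactly as you describe, by viewing $X \hadamardot Y$ as the closure of the image of $X \times Y$ under the coordinatewise multiplication homomorphism of tori and applying the Sturmfels--Tevelev push-forward formula (set-theoretically and then with multiplicities for generically finite maps). So you have unpacked the black boxes the paper invokes, but the route is the same.
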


\begin{proof}
The set theoretic statement is \cite[Proposition 5.5.11]{MS}.  The multiplicities statement follows from \cite[Remark 5.5.2]{MS}.
\end{proof}

\begin{Rmk} \label{explaintrop}
We explain the RHS of Proposition \ref{trop}.  As a set, it is
\[
\textup{trop}(X) + \textup{trop}(Y)  \, = \,  \big { \{ } \textbf{w}_{1} + \textbf{w}_{2} :  \, \textbf{w}_{1} \in \textup{trop}(X), \, \textbf{w}_{2} \in \textup{trop}(Y) \big { \} } \, \subset \, \RR^{n+1} / \RR \textbf{1}.
\]
Now choose refinements of $\text{trop}(X)$ and $\text{trop}(Y)$ so
\[
\big { \{ } \sigma_1 \, + \, \sigma_2 : \, \sigma_1 \text{ is a facet of trop}(X), \, \sigma_2 \text{ is a facet of trop}(Y) \big { \}}
\]
is a polyhedral fan.  Put weights on facets $\sigma$ by
\[
\text{mult}(\sigma) \, = \sum_{\sigma_1, \sigma_2} \text{mult}(\sigma_1) \, \text{mult}(\sigma_2) \, [N_{\sigma} \, : \, N_{\sigma_1} + N_{\sigma_2}].
\]
Here the sum is over all pairs of facets $\sigma_1 , \, \sigma_2$ in $\text{trop}(X), \, \text{trop}(Y)$ respectively such that $\sigma = \sigma_1 + \sigma_2$.  Also $N_{\sigma}$ is the maximal sublattice of $\ZZ^{n+1} / \ZZ \textbf{1}$ parallel to the affine span of $\sigma$, similarly for $N_{\sigma_1}$ and $N_{\sigma_2}$, and $[N_{\sigma} \, : \, N_{\sigma_1} + N_{\sigma_2}]$ is the lattice index.  Now multiply these weights by $\frac{1}{\delta}$.   Finally note the equality of weighted balanced fans is up to common refinement.
\end{Rmk}

\begin{Rmk} \label{multiplefactors}
Proposition \ref{trop} and Remark \ref{explaintrop} generalize to several varieties.  If $X_1, \ldots , X_r \subset \PP^n$ are irreducible and $X_1 \times \ldots \times X_r \dashrightarrow X_1 \hadamardot \ldots \hadamardot X_r$ is generically $\delta$ to $1$, then:
\[
\textup{trop}(X_1 \hadamardot \ldots \hadamardot X_r) \, = \, \frac {1} {\delta} \, \big {(} \textup{trop}(X_1) + \ldots + \textup{trop}(X_r) \big {)}
\]
as weighted balanced fans.  Multiplicities in $\text{trop}(X_1) + \ldots + \text{trop}(X_r)$ are:
\[
\textup{mult}(\sigma) \, = \sum_{\sigma_1, \ldots, \sigma_r} \text{mult}(\sigma_1) \ldots \text{mult}(\sigma_r) \, [N_{\sigma} \, : \, N_{\sigma_1} + \ldots + N_{\sigma_r}].
\]
\end{Rmk}

As a first application of tropical geometry, we can say what dimension we expect Hadamard products to have in general.

\begin{Prop}\label{expecteddim}
Let $X, Y \subset \PP^n$ be irreducible varieties.  Denote by $H \subset (\CC^{*})^{n+1}/\CC^{*}$ the largest subtorus acting on both $X$ and $Y$, and $G \subset (\CC^{*})^{n+1}/\CC^{*}$ the smallest subtorus having a coset containing $X$ and a coset containing $Y$.  Then
\[
\textup{min}\big{\{}\textup{dim}(X) + \textup{dim}(Y) - \textup{dim}(H), \, \textup{dim}(G)\big{\}}
\]
upper bounds $\textup{dim}(X \hadamardot Y)$.  We call it the \textup{expected dimension} of $X \hadamardot Y$.
\end{Prop}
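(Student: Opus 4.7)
The plan is to translate everything into tropical language via Proposition \ref{trop}. Since $\dim(W) = \dim(\textup{trop}(W))$ for any irreducible $W\subset\PP^n$ not contained in $\Delta_{n-1}$, and $\textup{trop}(X\hadamardot Y) = \textup{trop}(X) + \textup{trop}(Y)$ as sets, it suffices to give two upper bounds on the dimension of this Minkowski sum. I will treat the two bounds separately and then take the minimum.

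For the bound $\dim(X\hadamardot Y) \le \dim(G)$, I would use the fact that for any subtorus $G \subset (\CC^*)^{n+1}/\CC^*$ and any coset $gG$, the tropicalization $\textup{trop}(gG) \subset \RR^{n+1}/\RR\mathbf{1}$ is a linear subspace of dimension $\dim(G)$, independent of the coset because $\CC$ carries the trivial valuation. Since $X$ and $Y$ are each contained in a coset of $G$, both $\textup{trop}(X)$ and $\textup{trop}(Y)$ are contained in this common linear subspace $V_G$ of dimension $\dim(G)$, and therefore so is $\textup{trop}(X) + \textup{trop}(Y) \subset V_G + V_G = V_G$.

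For the bound $\dim(X\hadamardot Y) \le \dim(X) + \dim(Y) - \dim(H)$, the key tropical fact is that whenever a subtorus $T$ acts on an irreducible variety $Z$, $\textup{trop}(T)$ sits inside the lineality space of $\textup{trop}(Z)$ (the largest linear subspace $L$ with $\textup{trop}(Z) + L = \textup{trop}(Z)$). Applied to $H$, this gives $\textup{trop}(H)$ inside the lineality spaces of both $\textup{trop}(X)$ and $\textup{trop}(Y)$. Now pick generic points $\mathbf{w}_X$ and $\mathbf{w}_Y$ in the relative interiors of maximal cones $\sigma_X \subset \textup{trop}(X)$ and $\sigma_Y \subset \textup{trop}(Y)$. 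Locally around $\mathbf{w}_X + \mathbf{w}_Y$, the Minkowski sum agrees with $\textup{span}(\sigma_X) + \textup{span}(\sigma_Y)$, whose dimension is
\[
\dim\textup{span}(\sigma_X) + \dim\textup{span}(\sigma_Y) - \dim\big(\textup{span}(\sigma_X) \cap \textup{span}(\sigma_Y)\big).
\]
Each of $\textup{span}(\sigma_X)$ and $\textup{span}(\sigma_Y)$ contains the respective lineality space and hence contains $\textup{trop}(H)$, so the intersection has dimension at least $\dim(H)$, while $\dim\textup{span}(\sigma_X) = \dim(X)$ and $\dim\textup{span}(\sigma_Y) = \dim(Y)$. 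This yields the claimed bound.

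The main obstacle is the tropical–algebraic dictionary: namely, that the tropicalization of the maximal subtorus stabilizing $Z$ equals the lineality space of $\textup{trop}(Z)$, and that the tropicalization of the smallest subtorus whose coset contains $Z$ equals the affine linear span of $\textup{trop}(Z)$. These are standard facts for very affine varieties over a trivially valued field, and once invoked the dimension bounds fall out of elementary linear algebra on the polyhedral fans.
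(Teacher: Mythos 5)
Your proposal is correct and follows essentially the same route as the paper: tropicalize, use $\textup{trop}(X\hadamardot Y)=\textup{trop}(X)+\textup{trop}(Y)$, and invoke the dictionary identifying $\textup{trop}(H)$ with the common lineality space and $\textup{trop}(G)$ with the linear span of the two fans (the paper cites these facts from \cite{CTY}). The only cosmetic difference is that you use the containments $\textup{trop}(H)\subseteq\textup{span}(\sigma_X)\cap\textup{span}(\sigma_Y)$ and $\textup{trop}(X)+\textup{trop}(Y)\subseteq\textup{trop}(G)$ rather than the equalities, which is all the upper bound requires; note also that for the first bound you only need $\sigma_X+\sigma_Y\subseteq\textup{span}(\sigma_X)+\textup{span}(\sigma_Y)$, not the stronger local agreement you assert.
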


\begin{proof}
Let $V \subset \RR^{n+1} / \RR\textbf{1}$ be the common lineality space of trop($X$) and trop($Y$), i.e., the largest linear subspace contained in all cones of the two fans.  Let $W \subset \RR^{n+1} / \RR\textbf{1}$ be the linear span $\big{\langle}\text{trop}(X), \, \text{trop}(Y)\big{\rangle}$.  We need three facts:

\begin{enumerate}

\item[1.]Tropicalization applied to irreducible varieties preserves dimension.
\item[2.]Proposition \ref{trop}
\item[3.]Here trop($H$) = $V$ and trop($G$) = $W$ (see \cite [\textsection 3] {CTY}).

\end{enumerate}

\vspace{.1cm}

\noindent Now we have:

\vspace{.1cm}

\noindent $ \text{dim}(X \hadamardot Y) \, \, \, \stackrel {1.} {=}  \, \, \, \textup{dim}\big{(}\text{trop}(X \hadamardot Y)\big{)}$

\vspace{.05cm}

\hspace{1.52cm} $ \stackrel {2.} {=}  \, \, \, \textup{dim}\big{(}\textup{trop}(X) + \textup{trop}(Y)\big{)} \hspace{1.5cm}$

\vspace{.05cm}

\hspace{1.52cm} $ = \, \, \, \textup{max} \big{\{}\textup{dim}(C+D) : \, C \textup{ and } D \textup{ are facets in trop}(X) \textup{ and trop}(Y) \big{\}}$

\vspace{.05cm}

\hspace{1.52cm} $\leq \, \, \, \textup{min}\big{\{}\textup{dim}(\textup{trop}(X)) + \textup{dim}(\textup{trop}(Y)) - \textup{dim}(V), \, \textup{dim}(W)\big{\}}$

\vspace{.05cm}

\hspace{1.52cm} $\stackrel {3.} {=}  \, \, \, \textup{min} \big{\{} \textup{dim}(X) + \textup{dim}(Y) - \textup{dim}(H), \, \textup{dim}(G) \big{\}}. $
\end{proof}

It is a tricky matter as to when Hadamard products have deficient dimension.  Here is an example related to {\it tropical A-discriminants} (\cite{DFS}).  We verified it in \texttt{Macaulay2} and Felipe Rinc\'{o}n's \texttt{TropLi} (\cite{R}).

\begin{Ex}\label{dimcounterexample}
Let $X \subset \PP^{11}$  be the Segre product of $\PP^2$ and $\PP^3$, i.e., the variety of $3 \times 4$ matrices with rank $1$.  Let $Y \subset \PP^{11}$ be the linear subpace of $3 \times 4$ matrices with all row and column sums zero.  Then $H = 1$, $G = (\CC^{*})^{12}/\CC^{*}$, so the expected dimension is $\text{min}\{5+5-0, \, 11\} = 10$.\, But $X \hadamardot Y = \{3 \times 4 \text{ matrices with rank} \leq 2 \}$ has dimension $9$.   Here $\text{trop}(X)$ is a $5$-dimensional classical linear space and $\text{trop}(Y)$ is a 5-dimensional tropicalized linear space with $55$ rays and $1656$ maximal cones in its \textit{cyclic Bergman fan} (a refinement of the Gr\"obner subfan).  There is no common lineality space, the linear span is $\RR^{12} / \RR\textbf{1}$, but the Minkowski sum is $9$-dimensional.
\end{Ex}

\vspace{.1cm}

\section{Products of Linear Spaces}
Besides powers of a line, the Hadamard product of linear spaces is not linear.  We start with two numerical examples to illustrate this.
\subsection{Numerical examples}

\begin{Ex}\label{firstex}Let $L$ be the line in $\PP^3$ through points $[2:3:5:7]$ and $[11:13:17:19]$, and $M$ the line through $[23: 29: 31: 37]$ and $[41:43:47:53]$.  Let $x_0,\: x_1,\: x_2,\: x_3$ be the homogenous coordinates of $\PP^3$.  Using ideal elimination in \texttt{Macaulay2}, we compute $L\hadamardot M$ to be the quadric surface defined by:

\[88128x_0^2-89280x_0x_1-5299632x_1^2-817938x_0x_2+8896641x_1x_2-1481805x_2^2\] \[-321510x_0x_3-1777545x_1x_3-54250x_2x_3+116375x_3^2.\]

\vspace{0.15cm}
\noindent Note the ruling on $L\hadamardot M$ is $\{p\hadamardot M: p\in L\} \cup \{q\hadamardot L: q\in M\}$.
\end{Ex}

\begin{Rmk}
In general, for linear spaces $L, M \subset \PP^n$, $L\hadamardot M$ has big Fano schemes, because $L\hadamardot M$ contains the linear spaces $\{p\hadamardot M : p\in L\} \cup \{q\hadamardot L : q\in M\}$.
\end{Rmk}

\begin{Ex}\label{secondex} Let $P$ be the $2$-plane in $\PP^5$ spanned by $[3:1:4:1:5:9], \, [2:6:5:3:5:8]$ and $[9:7:9:3:2:3]$.  In \texttt{Macaulay2}, we compute the Hadamard square $P^{\hadamardot 2}$ to be a cubic hypersuface in $\PP^5$, whose defining polynomial is too big to exhibit here.  We also discover that $P^{\hadamardot 2}$ is singular in codimension $2$, with singular locus $\overline{\{p \hadamardot p : p\in P \}}$. Future work could study singular loci of Hadamard products.  A precedent for secant varieties is \cite[\textsection 5]{MOZ}.
\end{Ex}

\vspace{.1cm}

\subsection{Linear span} One way to quantify nonlinearity of an embedded variety is by the dimension of its linear span.

\begin{Prop}\label{genvander}
Let $L \subset \PP^n$ be a generic linear space of dimension m.  Then the linear span $\langle L^{\hadamardot r} \rangle$ has dimension $\min \{ \binom {m+r}{r} - 1, n \}$.
\end{Prop}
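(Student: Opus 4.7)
The plan is to identify $\langle L^{\hadamardot r}\rangle$ explicitly as the span of a distinguished set of $\binom{m+r}{r}$ points, then recognize the resulting $\binom{m+r}{r}\times (n+1)$ coefficient matrix as a collection of points on a Veronese variety, for which the dimension of the generic span is standard.

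First, I would fix a basis $p_0,\ldots,p_m$ of $L$ and observe that, because $\hadamardot$ is bilinear, commutative and associative, any $r$-fold Hadamard product $q_1\hadamardot\cdots\hadamardot q_r$ of points $q_k\in L$ expands as a linear combination of the monomial-type products
\[
P_\alpha \, := \, p_0^{\hadamardot\alpha_0}\hadamardot\cdots\hadamardot p_m^{\hadamardot\alpha_m} \qquad (\alpha\in\ZZ_{\ge 0}^{m+1},\ |\alpha|=r).
\]
Since each $P_\alpha$ is itself in $L^{\hadamardot r}$, the linear span $\langle L^{\hadamardot r}\rangle$ is exactly the span of the $P_\alpha$. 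Taking linear span of a set equals that of its Zariski closure, so the closure in the definition of Hadamard product causes no issue. This already yields the upper bound $\dim\langle L^{\hadamardot r}\rangle\le\min\{\binom{m+r}{r}-1,n\}$.

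Next, I would write $p_i=(x_{i0},\ldots,x_{in})$ and arrange the vectors $P_\alpha$ as rows of a $\binom{m+r}{r}\times(n+1)$ matrix $M$, so that $M_{\alpha,j}=\prod_i x_{ij}^{\alpha_i}$. The crucial observation is that column $j$ of $M$ is the Veronese image $v_r(y_j)$ of the point $y_j=(x_{0j},\ldots,x_{mj})\in\CC^{m+1}$, where $v_r\colon\CC^{m+1}\to\CC^{\binom{m+r}{r}}$ records all degree-$r$ monomials. Thus the rank of $M$ equals the dimension of the linear span of the $n+1$ points $v_r(y_0),\ldots,v_r(y_n)$ sitting inside the ambient $\PP^{\binom{m+r}{r}-1}$ of the Veronese variety $v_r(\PP^m)$.

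For a generic $L$, the $y_j$ are generic in $\PP^m$, so the $v_r(y_j)$ are generic points of $v_r(\PP^m)$. Since $v_r(\PP^m)$ is irreducible and nondegenerate (it spans all of $\PP^{\binom{m+r}{r}-1}$), a standard inductive argument shows that any $k$ generic points on it span a linear subspace of dimension $\min\{k-1,\binom{m+r}{r}-1\}$: given the span of the first $k-1$ points, nondegeneracy forces this span not to contain the whole variety, so a generic $k$-th point lies outside. Applying this with $k=n+1$ gives $\mathrm{rank}(M)=\min\{n+1,\binom{m+r}{r}\}$, hence $\dim\langle L^{\hadamardot r}\rangle=\min\{n,\binom{m+r}{r}-1\}$ for generic $L$, by upper semicontinuity of matrix rank. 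The only real obstacle is this last genericity step, but it is a classical consequence of the nondegeneracy of the Veronese variety and requires no new ideas beyond the identification of $M$'s columns with Veronese points.
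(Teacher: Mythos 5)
Your proof is correct, and it follows the paper's reduction step exactly: identify $\langle L^{\hadamardot r}\rangle$ with the row span of the $\binom{m+r}{r}\times(n+1)$ matrix of monomial products $p_0^{\hadamardot\alpha_0}\hadamardot\cdots\hadamardot p_m^{\hadamardot\alpha_m}$ (the paper invokes the higher-dimensional analogue of Lemma \ref{linenoclosurelem} for this; your multilinearity-plus-closure argument is an equivalent and slightly more self-contained justification). Where you diverge is in proving that this matrix generically has full rank. The paper recognizes its maximal minors as the generalized Vandermonde determinants of \cite{AA} and argues they are nonzero polynomials in the entries because the Leibniz expansion has no cancelling terms (the exponent vectors $\alpha$ are distinct, so distinct permutations produce distinct monomials). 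You instead read the columns as Veronese points $v_r(y_j)$ of the columns $y_j$ of the matrix defining $L$, and use the classical fact that generic points on an irreducible nondegenerate variety are in linearly general position. Both arguments are valid; yours is more geometric and avoids the external reference, while the paper's is a one-line combinatorial observation that also shows the minors are honest $\pm 1$-coefficient polynomials. One small point worth making explicit in your write-up: genericity of $L$ in the Grassmannian must be transferred to joint genericity of the column tuple $(y_0,\ldots,y_n)$; this works because the full-rank condition on $M$ is $GL_{m+1}$-invariant (the row span of $M$ is intrinsic to $L$), so the nonempty open locus of good matrices descends to a nonempty open locus of good $L$. Also, strictly, the rank of $M$ is one more than the projective dimension of the span of the $v_r(y_j)$, but your final bookkeeping is consistent with this.
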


\begin{proof}
Let $L = \langle p_0, p_1, \ldots , p_m \rangle$ and correspondingly write:

\[
L = \textup{rowspan}\begin{pmatrix}
a_{00} & a_{01} & \dots & a_{0n}\\
a_{10} & a_{11} & \dots & a_{1n}\\
\dots & \dots & \dots & \dots \\
a_{m0} & a_{m1} & \dots & a_{mn}
\end{pmatrix}.
\]

\noindent By the analogue of Lemma \ref{linenoclosurelem} for higher dimensional $L$, we have:
\[
\langle L^{\hadamardot r} \rangle = \langle p_{0}^{\hadamardot r_0} \hadamardot p_{1}^{\hadamardot r_1} \hadamardot \ldots \hadamardot p_{m}^{\hadamardot r_m}  :  r_i \in \mathbb{Z}_{\geq 0} \text{  and  } r_0 + r_1 + \ldots + r_m = r \rangle,
\]
and correspondingly:

\[
L^{\hadamardot r} = \textup{rowspan}\begin{pmatrix}
a_{00}^{r} & a_{01}^{r} & \dots & a_{0n}^{r}\\
\dots & \dots & \dots & \dots \\
\prod a_{i0}^{r_i} &  \prod a_{i1}^{r_i} & \dots & \prod a_{im}^{r_i} \\
\dots & \dots & \dots & \dots \\
a_{m0}^{r} & a_{m1}^{r} & \dots & a_{mn}^{r}\\
\end{pmatrix}.
\]
The maximal minors of this $\binom {m+r}{r} \times (n+1)$ matrix are the \textit{generalized Vandermonde determinants} from \cite{AA}.  To finish the proof, we note they are non-zero polynomials in $\mathbb{C}[a_{ij} : 0 \leq i \leq m \, , \, 0 \leq j \leq n]$, because each of their coefficients is $\pm$1.  Indeed, they have no like terms as sums over the symmetric group.
\end{proof}

\begin{Cor} \label{spandim}
Let $L \subset \PP^n$ be a generic linear space of dimension $m$.  Assume $n \geq \binom{m+r}{r} - 1$.  Consider points $p_i, q_j \in L$ for $1 \leq i, j \leq r$ such that $p_1 \hadamardot \ldots \hadamardot p_r =  q_1 \hadamardot \ldots \hadamardot q_r$.  Then $\{p_1, \ldots , p_r \} = \{q_1, \ldots , q_r \}$.
\end{Cor}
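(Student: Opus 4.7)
The plan is to turn the identifiability question into unique factorization of polynomials in an auxiliary polynomial ring. Choose a basis $p_0, p_1, \ldots, p_m$ of $L$, so each point of $L$ corresponds to a class $[\lambda_0 : \cdots : \lambda_m] \in \PP^m$ representing $\sum_k \lambda_k p_k$, and to each such class attach the linear form $\ell_\lambda(z) = \sum_{k=0}^m \lambda_k z_k$ in auxiliary variables $z_0, \ldots, z_m$. So the points $p_1, \ldots, p_r$ correspond to linear forms $\ell_1, \ldots, \ell_r$, and similarly the $q_j$ to forms $\ell_1', \ldots, \ell_r'$.

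The bridge between the two sides is the linear map $\Phi : \CC[z_0, \ldots, z_m]_r \to \CC^{n+1}$ that sends a monomial $z_0^{\alpha_0} \cdots z_m^{\alpha_m}$ to the Hadamard product $p_0^{\hadamardot \alpha_0} \hadamardot \cdots \hadamardot p_m^{\hadamardot \alpha_m}$. By commutativity and distributivity of Hadamard multiplication, $\Phi$ carries a product of linear forms $\prod_{s=1}^r \ell_{\lambda^{(s)}}(z)$ to the Hadamard product of the points of $L$ parametrized by $\lambda^{(1)}, \ldots, \lambda^{(r)}$. The proof of Proposition \ref{genvander} furthermore shows that, under the hypothesis $n \geq \binom{m+r}{r} - 1$ with $L$ generic, the $\binom{m+r}{r}$ images $\Phi(z^\alpha)$ are linearly independent, so $\Phi$ is injective.

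The conclusion then follows by pulling everything back through $\Phi$. If $p_1 \hadamardot \cdots \hadamardot p_r = q_1 \hadamardot \cdots \hadamardot q_r$ in $\PP^n$, then the associated degree-$r$ polynomials $\prod_s \ell_s(z)$ and $\prod_s \ell_s'(z)$ agree up to a nonzero scalar in $\CC[z_0, \ldots, z_m]$, and unique factorization in this UFD, together with the irreducibility of linear forms, forces the multisets of linear factors (up to scalar) to coincide. Translating back via the bijection between classes in $\PP^m$ and points of $L$ gives $\{p_1, \ldots, p_r\} = \{q_1, \ldots, q_r\}$ as multisets, and in particular as sets. The only non-routine ingredient is the injectivity of $\Phi$; once that is secured from Proposition \ref{genvander}, the rest is a direct appeal to unique factorization.
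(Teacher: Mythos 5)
Your proposal is correct and follows essentially the same route as the paper: both proofs encode points of $L$ as linear forms in auxiliary variables, use the map sending monomials to Hadamard products of basis points (whose injectivity in degree $r$ is exactly the content of Proposition \ref{genvander} via the generalized Vandermonde determinants), and conclude by unique factorization in $\CC[T_0,\ldots,T_m]$. The only difference is cosmetic---the paper packages $\Phi$ as a ring homomorphism into the direct product $\CC^{\times(n+1)}$ and fixes lifts so the equality is exact rather than up to scalar, but your handling of the scalar via the UFD argument is equally valid.
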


\begin{proof}
Choose a basis $L = \langle t_0, \ldots , t_m \rangle$. Choose lifts $\tilde{p_i}, \tilde{q_i}, \tilde{t_i}$ of $p_i, q_i, t_i$ to the affine cone $\tilde{L} \subset \mathbb{A}^{n+1}$ over $L$ so that $\tilde{p_1} \hadamardot \ldots \hadamardot \tilde{p_r} =  \tilde{q_1} \hadamardot \ldots \hadamardot \tilde{q_r}$.  Expand:

\vspace{.15cm}
\begin{itemize}
\item $\tilde{p_i} = \sum_{j=0}^{m} \alpha_{ij}\tilde{t_j}$ \vspace{.1cm}
\item $\tilde{q_i} = \sum_{j=0}^{m} \beta_{ij}\tilde{t_j}.$
\end{itemize}
\vspace{.15cm}

\noindent Define a ring homomorphism
\begin{center}
$\Phi : \,  \CC[T_0, \ldots, T_m] \, \longrightarrow \, \CC^{\times(n+1)}$

\hspace{1.25cm} $T_i \longmapsto t_i$.
\end{center}
Here the domain is the polynomial ring, and the codomain is the direct product of $n+1$ copies of $\CC$.  Then:

\begin{center}
\vspace{.15cm}
$ \Phi \, \big{(} \, (\sum_{j=0}^{m} \alpha_{1j}T_j) \, \ldots \, (\sum_{j=0}^{m} \alpha_{rj}T_j)  \, \big{ )} \, \, \,  = \, \, \, \Phi \, \big{(} \, (\sum_{j=0}^{m} \beta_{1j}T_j) \, \ldots \, (\sum_{j=0}^{m} \beta_{rj}T_j) \, \big{ )} $
\vspace{.15cm}
\end{center}

\noindent By Proposition \ref{genvander}, $\text{ker}(\Phi)$ contains no homogenous polymomials of degree $r$.  Hence:

\begin{center}
\vspace{.15cm}
$ (\sum_{j=0}^{m} \alpha_{1j}T_j) \, \ldots \, (\sum_{j=0}^{m} \alpha_{rj}T_j) \, \, \, = \, \, \,   (\sum_{j=0}^{m} \beta_{1j}T_j) \, \ldots \, (\sum_{j=0}^{m} \beta_{rj}T_j). $
\vspace{.15cm}
\end{center}

\noindent Since $\CC[T_0, \ldots, T_m]$ is a unique factorization domain, the corollary follows.
\end{proof}

\begin{Rmk}\label{Landsberg}
Corollaries \ref{final} and \ref{spandim} correspond to \textit{identifiablity} (\cite{BC12, CC2, Str}) in the theory of secant varieties.  A direction for future research is to look for $(\text{dim}(L), r, n)$ such that $L^{\hadamardot r} \subset \PP^n$ is generically \textit{not} identifiable.  For secant varieties of symmetric tensors, a classification is \cite[Theorem 3.3.3.1]{JM}.
\end{Rmk}

\begin{Rmk}\label{identify}
Proposition \ref{genvander} and Corollary \ref{spandim} generalize to several linear spaces.  Let $L_1, \ldots, L_k \subset \PP^n$ be generic linear spaces of dimensions $m_1, \ldots, m_k$.  The linear span $\langle L_1^{r_1} \hadamardot \ldots \hadamardot L_k^{r_k} \rangle$ has dimension $\text{min} \big{\{} \binom {m_1 + r_1}{r_1} \ldots \binom {m_k + r_k}{r_k} - 1, \, \, n \big {\}}$.  If $n \geq \binom {m_1 + r_1}{r_1} \ldots \binom {m_k + r_k}{r_k} - 1$, then
$L_1^{r_1} \hadamardot \ldots \hadamardot L_k^{r_k}$ is identifiable.  The proofs are similar.
\end{Rmk}

\vspace{.1cm}

\subsection{Degree formula}

As a second application of tropical geometry, we now prove a general degree formula for Hadamard products of linear spaces, when the ambient projective space is of sufficiently high dimension.  This is one of our main results.

\begin{Thm}\label{degformula}
Let $L_1, L_2, \ldots, L_r \subset \mathbb{P}^n$ be generic linear spaces of dimensions $m_1, m_2, \ldots, m_r$, respectively.   Set $m = m_1 + m_2 + ... + m_r$ and $d = {m_1 + m_2 + \ldots + m_r \choose m_1, \, m_2, \, \ldots \, , \, m_r}$.  Assume $n \gg 0$.  Then $L_1 \hadamardot L_2 \hadamardot \ldots \hadamardot L_r$ has dimension $m$ and degree
\[
\begin{cases}
   d & \text{if } L_i \text{ are pairwise distinct} \\
   \frac{d}{r!}  & \text{if } L_i \text{ are the same.}
\end{cases}
\]
In full generality, when $L_i$ form a multiset with multiplicites $r_1, \ldots, r_k$ (where $r_1 + \ldots + r_k = r$), the dimension is $m$ and the degree is $ \frac{d} {(r_1!) \,\ldots \, (r_k!)}$.
\end{Thm}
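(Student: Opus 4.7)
The plan is to parametrize $X := L_1 \hadamardot L_2 \hadamardot \ldots \hadamardot L_r$ by the Hadamard map and reduce the degree computation to the multiprojective B\'ezout theorem. Fixing bases, identify each $L_i \cong \PP^{m_i}$ and consider
\[ \phi \colon \PP^{m_1} \times \PP^{m_2} \times \ldots \times \PP^{m_r} \dashrightarrow \PP^n, \quad (p_1,\ldots,p_r) \mapsto p_1 \hadamardot \ldots \hadamardot p_r. \]
Each of the $n+1$ output coordinates of $\phi$ is a monomial that is linear in every factor, so the pullback of any hyperplane of $\PP^n$ is a hypersurface of multidegree $(1,1,\ldots,1)$ in the domain. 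The image of $\phi$ is $X$.

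First I would identify the generic fiber cardinality $\delta$ of $\phi$ onto $X$. By Remark \ref{identify}, for $n$ large enough (specifically $n \geq \binom{m_1+r_1}{r_1} \cdots \binom{m_k+r_k}{r_k} - 1$, where I collect the $L_i$'s into $k$ groups of equal spaces with multiplicities $r_1,\ldots,r_k$), the Hadamard product is identifiable, so the unordered multiset $\{p_1,\ldots,p_r\}$ is recoverable from $p_1\hadamardot\ldots\hadamardot p_r$. Because the distinct $L_i$'s are generic and $L_i \cap L_j$ has strictly smaller dimension than $L_i$ for $L_i \ne L_j$, a generic $p \in L_i$ lies in no other $L_j$, so the only symmetry of $\phi$ comes from permuting indices inside each group of coinciding $L_i$'s, giving $\delta = r_1! \cdots r_k!$. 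Having finite generic fibers then forces $\dim X = m_1 + \ldots + m_r = m$, matching the upper bound of Proposition \ref{expecteddim}.

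Next I would compute $\deg X$ directly. Pick a generic linear subspace $\Lambda \subset \PP^n$ of codimension $m$ cut out by $m$ generic hyperplanes. Then $\deg X = |X \cap \Lambda|$ with the intersection transverse and reduced, by Bertini. The preimage $\phi^{-1}(\Lambda)$ is a zero-dimensional complete intersection of $m$ generic multidegree-$(1,\ldots,1)$ hypersurfaces in $\PP^{m_1} \times \ldots \times \PP^{m_r}$. By the multiprojective B\'ezout theorem, its cardinality equals the coefficient of $h_1^{m_1}\cdots h_r^{m_r}$ in $(h_1 + \ldots + h_r)^m$, which is $\binom{m}{m_1,\ldots,m_r} = d$. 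Since $\phi$ restricted to $\phi^{-1}(\Lambda)$ is $\delta$-to-$1$ onto $X \cap \Lambda$,
\[ d = |\phi^{-1}(\Lambda)| = \delta \cdot |X \cap \Lambda| = \delta \cdot \deg X, \]
so $\deg X = d / (r_1! \cdots r_k!)$; specializing to all $L_i$ distinct and to all $L_i$ equal recovers the two displayed cases.

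The main obstacle is arranging for a single choice of generic $L_i$ and generic $\Lambda$ under which all of the following hold simultaneously: $\phi^{-1}(\Lambda)$ consists of $d$ distinct smooth points in the interior of the domain of $\phi$ (so no contribution is lost to the indeterminacy locus or to the coordinate strata $\Delta_i$); $\phi$ is exactly $\delta$-to-$1$ on that preimage; and $X \cap \Lambda$ is reduced of cardinality $\deg X$. Each item is Bertini-type or an upper-semicontinuity statement, but pinning down the precise inequality on $n$ that makes them hold together is the delicate point; the hypothesis $n \gg 0$ is used both for the expected dimension to equal $m$ and to invoke the identifiability of Remark \ref{identify}.
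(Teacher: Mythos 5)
Your argument is correct, but it is a genuinely different proof from the one in the paper. The paper works tropically: it identifies $\textup{trop}(L_i)$ with the standard tropical linear space $\Lambda_{m_i}$, computes the Minkowski sum with multiplicities via Remark \ref{multiplefactors} (the lattice indices all being $1$, each facet of $\Lambda_m$ acquires multiplicity $d$), divides by the generic fiber cardinality $(r_1!)\cdots(r_k!)$ supplied by Remark \ref{identify}, and reads off the degree as the multiplicity of the origin in the stable intersection with $\Lambda_{n-m}$. You instead run a classical intersection-theoretic computation: the parametrizing map $\phi$ pulls hyperplanes back to multidegree-$(1,\ldots,1)$ divisors, so the class of $\phi^{-1}(\Lambda)$ is $(h_1+\cdots+h_r)^m$ and multiprojective B\'ezout gives $d$ points upstairs, which you divide by the same fiber count $\delta$ from Remark \ref{identify}. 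Both proofs hinge on the identifiability statement for $\delta$; your justification that $\delta=r_1!\cdots r_k!$ (a generic point of $L_i$ lies in no distinct $L_j$) is the same as what the paper implicitly uses. The genericity worries you flag at the end are resolvable by standard means and need not remain ``delicate'': for $n\ge m$ the base locus of $\phi$ is empty, since a generic $m_i$-plane misses $\Delta_{n-m_i-1}$, so each $p_i$ has at most $m_i$ zero coordinates and a tuple can annihilate at most $m<n+1$ coordinates; then Kleiman transversality applied to the morphism $\phi$ and the $PGL_{n+1}$-orbit of $\Lambda$ gives that $\phi^{-1}(\Lambda)$ is reduced of cardinality exactly $d$ and maps $\delta$-to-$1$ onto a reduced $X\cap\Lambda$; the only hypothesis on $n$ beyond $n\ge m$ is the bound of Remark \ref{suffbig} needed for identifiability. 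What the tropical route buys, and your route does not, is the painless extension to reciprocal linear spaces in Corollary \ref{reciprocal}: there the natural parametrization of $\tilde L^{-1}$ is by forms of higher degree with a nontrivial base locus, so the B\'ezout count no longer drops out cleanly, whereas tropically one merely replaces $\Lambda_{\tilde m_i}$ by $-\Lambda_{\tilde m_i}$. What your route buys is a self-contained, elementary argument that avoids the machinery of balanced fans and stable intersection entirely.
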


\begin{proof}[Proof (in full generality).]  Generically $L_i$ has Pl\"ucker coordinates all nonzero, so $\text{trop}(L_i)$ equals the \textit{standard tropical linear space} $\Lambda_{m_i}$ of dimension $m_i$ (\cite[Example 4.2.13]{MS}):
\[
\textup{trop}(L_i) \, = \, \Lambda_{m_i} \, \, = \bigcup_{0 \leq j_1 < \ldots < j_{m_i} \leq n} \textup{pos}(\textbf{e}_{j_1}, \ldots , \textbf{e}_{j_{m_i}}).
\]
Here $\textbf{e}_0, \ldots , \textbf{e}_n$ are the images in $\RR^{n+1} / \RR\textbf{1}$ of the standard basis vectors, pos denotes positive span, and all multiplicites are $1$.

By Remark \ref{multiplefactors}, $\text{trop}(L_1) \, + \, \ldots \, + \, \text{trop}(L_r)$ has support the same as $\Lambda_m$, and for each facet:
\[
\textup{mult}(\sigma) \, \, = \sum_{\sigma_1, \ldots, \sigma_r} \text{mult}(\sigma_1) \ldots \text{mult}(\sigma_r) \, [N_{\sigma} \, : \, N_{\sigma_1} + \ldots + N_{\sigma_r}] \, \, = \sum_{\sigma_1, \ldots, \sigma_r} 1 \, \, = \, \, d.
\]

By Remark \ref{identify}, $L_1 \, \times \, \ldots \, \times \, L_r \dashrightarrow L_1 \, \hadamardot \, \ldots \, \hadamardot \, L_r$ is generically $(r_1!) \,\ldots \, (r_k!)$ to $1$.  So by Remark \ref{multiplefactors}:
\[
\textup{trop}(L_1 \hadamardot \ldots \hadamardot L_r) = \frac{\tiny{d}} {(\tiny {r_1!) \,\ldots \, (r_k!)}} \, \, \,  \Lambda_m.
\]

It follows that $L_1 \hadamardot \ldots \hadamardot L_r$ has dimension $m$ and degree (\cite[Corollary 3.6.16]{MS}):
\begin{align*}
\textup{deg}(L_1 \hadamardot \ldots \hadamardot L_r) \, & = \, \, \textup{mult}_{\textbf{0}} \, \big {(}\, \textup{trop}(L_1 \hadamardot \ldots \hadamardot L_r) \, \,  \cap{}_{\textup{st}} \, \, \Lambda_{n-m}\big {)} \\
& = \, \, \textup{mult}_{\textbf{0}} \, \big {(}\, \frac{\tiny{d}} {(\tiny {r_1!) \,\ldots \, (r_k!)}} \, \, \,  \Lambda_m \, \,  \cap{}_{\textup{st}} \, \, \Lambda_{n-m}\big {)}  \\
& = \, \, \frac{\tiny{d}} {(\tiny {r_1!) \,\ldots \, (r_k!)}} \, .
\end{align*}
\noindent Here we take \textit{stable intersection} (\cite{JY}) with the standard tropical linear space of complementary dimension, and then measure the multiplicity of the origin.
\end{proof}

\begin{Rmk}\label{suffbig}
We can be explicit about $n \gg 0$ in Theorem \ref{degformula}.  From the application of Remark \ref{identify}, the hypothesis is $n \geq \binom {m_1 + r_1}{r_1} \ldots \binom {m_k + r_k}{r_k} - 1$.
\end{Rmk}

We can extend Theorem \ref{degformula} to include the case of negative exponents.  Given a linear space $L \subset \PP^n$, its \textit{reciprocal linear space} is:

$$L^{-1} = \overline{\{[a_{0}^{-1}: \ldots : a_{n}^{-1}] \in \PP^n: [a_{0}: \ldots: a_{n}] \in L \setminus \Delta_{n-1} \}}.$$

\noindent These varieties have good combinatorial properties.  In \cite{HuhK}, they were used to prove log-concavity of the coefficients of the characteristic polynomial of every realizable matroid.  From \cite{PS}, the defining ideal of $L^{-1}$ has all initial ideals squarefree, and a universal Gr\"obner basis in bijection with the set of circuits in the matroid realized by $L$.  For familiar examples, note rational normal curves are reciprocal lines.

%

\begin{Cor}\label{reciprocal}
Use the notation of Theorem \ref{degformula}.  Additionally, let $\tilde{L}_1, \ldots, \tilde{L}_{s}$ be generic linear spaces of dimensions $\tilde{m}_1, \ldots, \tilde{m}_{s}$, and set $\tilde{m} = \tilde{m}_1 + \ldots + \tilde{m}_s$ and $\tilde{d} = {\tilde{m}_1 + \ldots + \tilde{m}_s \choose \tilde{m}_1, \, \ldots \, , \, \tilde{m}_s}$.  Assume $n \gg 0$ and $\tilde{L}_{i}$ form a multiset with multiplicities $s_1, \ldots, s_{\ell}$.  Then $L_1 \hadamardot \ldots \hadamardot L_r \hadamardot \tilde{L}^{-1}_1 \hadamardot \ldots \hadamardot \tilde{L}^{-1}_s$ has dimension $m+\tilde{m}$ and degree:
\[
\binom{n-m}{\tilde{m}} \cdot \frac{d}{(r_1!) \,\ldots \, (r_k!)} \cdot \frac{\tilde{d}}{(s_1!) \,\ldots \, (s_\ell!)}.
\]

\end{Cor}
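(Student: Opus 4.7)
The plan is to extend the tropical argument of Theorem \ref{degformula}, introducing two new ingredients: the tropicalization of a reciprocal linear space and a refined stable-intersection count.  Since Hadamard inversion $[x_0:\cdots:x_n] \mapsto [x_0^{-1}:\cdots:x_n^{-1}]$ is an automorphism of the ambient torus whose tropicalization is negation $\textbf{w} \mapsto -\textbf{w}$ on $\RR^{n+1}/\RR\textbf{1}$, one has $\textup{trop}(\tilde{L}_i^{-1}) = -\Lambda_{\tilde{m}_i}$ for each generic $\tilde{L}_i$.  Assuming an extension of the identifiability assertion in Remark \ref{identify} to the mixed linear/reciprocal-linear setting, so that the parametrization map is generically $\delta$-to-$1$ with $\delta = (r_1!)\cdots(r_k!)(s_1!)\cdots(s_\ell!)$, Remark \ref{multiplefactors} yields
\[
\textup{trop}\bigl(L_1 \hadamardot \cdots \hadamardot L_r \hadamardot \tilde{L}_1^{-1} \hadamardot \cdots \hadamardot \tilde{L}_s^{-1}\bigr) \, = \, \tfrac{1}{\delta}\bigl(\Lambda_{m_1}+\cdots+\Lambda_{m_r}-\Lambda_{\tilde{m}_1}-\cdots-\Lambda_{\tilde{m}_s}\bigr).
\]
A maximum-dimensional cone of the right-hand side has the form $\sigma_{S,T} = \textup{pos}(\textbf{e}_i : i \in S) + \textup{pos}(-\textbf{e}_j : j \in T)$ for disjoint $S, T \subseteq \{0, \ldots, n\}$ of sizes $m$ and $\tilde{m}$; decomposing $S$ into ordered parts of sizes $(m_1, \ldots, m_r)$ and $T$ into ordered parts of sizes $(\tilde{m}_1, \ldots, \tilde{m}_s)$ gives $d \cdot \tilde{d}$ expressions, each of lattice index $1$ since the $\textbf{e}$-vectors involved are linearly independent, so $\sigma_{S,T}$ carries multiplicity $d\tilde{d}$.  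In particular the dimension equals $m+\tilde{m}$.

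Next I would compute the degree as the stable-intersection multiplicity at the origin with the complementary standard linear space $\Lambda_{n-m-\tilde{m}}$, using the characterization $V \cap_{\textup{st}} \Lambda_{n-m-\tilde{m}} = \lim_{\epsilon \to 0^+} V \cap (\Lambda_{n-m-\tilde{m}} + \epsilon\xi)$ for a generic $\xi \in \RR^{n+1}/\RR\textbf{1}$.  A pair of facets $\sigma_{S,T}$ and $\tau_U = \textup{pos}(\textbf{e}_k : k \in U)$ with $|U| = n-m-\tilde{m}$ contributes iff $S$, $T$, $U$ are pairwise disjoint (so exactly one index $\ell$ lies outside $S \cup T \cup U$, which is precisely when the spans are transverse in $\RR^{n+1}/\RR\textbf{1}$) and the linear system $\sum_{i\in S} a_i\textbf{e}_i - \sum_{j\in T} b_j\textbf{e}_j - \sum_{k\in U} c_k\textbf{e}_k - \lambda\textbf{1} = \epsilon\xi$ admits a non-negative solution.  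Reading off the $\textbf{e}_\ell$-coordinate gives $\lambda = -\epsilon\xi_\ell$, after which non-negativity reduces to $\xi_i > \xi_\ell$ for $i \in S$ and $\xi_i < \xi_\ell$ for $i \in T \cup U$.  For generic $\xi$ these inequalities force $\ell$ to be the index with exactly $m$ larger $\xi$-values and $n-m$ smaller ones, pinning down $\ell$ and consequently $S$; then $T$ can be any $\tilde{m}$-subset of the remaining $n-m$ indices, yielding $\binom{n-m}{\tilde{m}}$ contributing pairs, each of lattice index $1$.

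Combining the above, $\deg = \frac{d\tilde{d}}{\delta}\binom{n-m}{\tilde{m}}$, which rearranges to the stated formula.  The main obstacle is the stable-intersection analysis: one must simultaneously verify transversality in the quotient $\RR^{n+1}/\RR\textbf{1}$ (forcing the three index sets to be pairwise disjoint) and then track the sign pattern of the generic perturbation $\xi$, with the key observation that $\xi$ imposes a total order on coordinate directions and uniquely pins down $\ell$.  A secondary difficulty is the identifiability claim $\delta = (r_1!)\cdots(s_\ell!)$ in the mixed setting; I would address it by extending the generalized-Vandermonde argument of Proposition \ref{genvander} to a parametrization treating reciprocals as Laurent monomials alongside the linear monomials, invoking unique factorization in the appropriate Laurent polynomial ring to rule out coincidences across the two types.
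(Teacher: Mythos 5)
Your proposal is correct and follows essentially the same route as the paper: tropicalize the reciprocal factors to $-\Lambda_{\tilde m_i}$, identify the mixed positive/negative cones of multiplicity $d\tilde d$, divide by the generic fiber degree $(r_1!)\cdots(r_k!)(s_1!)\cdots(s_\ell!)$, and extract the degree as a stable intersection with $\Lambda_{n-m-\tilde m}$ contributing $\binom{n-m}{\tilde m}$ transverse pairs of lattice index $1$. Your explicit perturbation analysis pinning down the distinguished index $\ell$ is a more detailed rendering of the paper's choice of a generic displacement vector in a fixed octant, and for the fiber-degree claim the paper simply clears denominators to reduce to Remark \ref{identify} applied to $L_1\hadamardot\cdots\hadamardot L_r\hadamardot\tilde L_1\hadamardot\cdots\hadamardot\tilde L_s$, which is a slightly more direct substitute for your Laurent-polynomial unique-factorization argument.
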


\begin{proof}
The tropicalization of the reciprocal linear space $\tilde{L}_{i}^{-1}$ equals:
\[
\text{trop}(\tilde{L}_{i}^{-1}) = -\Lambda_{\tilde{m}_i} = \bigcup_{0 \leq j_1 < \ldots < j_{\tilde{m}_i} \leq n} \textup{neg}(\textbf{e}_{j_1}, \ldots , \textbf{e}_{j_{\tilde{m}_i}}).
\]
Here neg denotes negative span, and all multiplicites are $1$.  It follows that in
\[
\text{trop}(L_1) + \ldots \text{trop}(L_r) + \text{trop}(\tilde{L}_{1}^{-1}) + \ldots + \text{trop}(\tilde{L}_{s}^{-1}),
\]
the cones are:
\begin{equation}\label{plusminuscones}
\text{pos}(\textbf{e}_i: i \in I) + \text{neg}(\textbf{e}_j: j \in J)  \tag{*}
\end{equation}
for $I, J \subset \{0, \ldots ,n \}$ with $|I|=m, \, |J|=\tilde{m}$ and $I \cap J = \emptyset$.  Similar to in the proof of Theorem \ref{degformula}, the multiplicities of these cones are all $d\tilde{d}$.
Using Remark \ref{identify}, we can clear denominators to deduce that:
\[
L_1 \times \ldots \times L_r \times \tilde{L}_{1}^{-1} \times \ldots \times \tilde{L}_{s}^{-1} \dashrightarrow L_1 \hadamardot \ldots \hadamardot L_r \hadamardot \tilde{L}^{-1}_1 \hadamardot \ldots \hadamardot \tilde{L}^{-1}_s
\]
is generically $(r_1!) \,\ldots \, (r_k!)(s_1!) \,\ldots \, (s_\ell!)$ to 1.  So by Remark \ref{multiplefactors}:
\[
\text{trop}(L_1 \hadamardot \ldots \hadamardot L_r \hadamardot \tilde{L}^{-1}_1 \hadamardot \ldots \hadamardot \tilde{L}^{-1}_s)
\]
consists of the cones \eqref{plusminuscones}, each with multiplicity $\frac{d}{(r_1!) \,\ldots \, (r_k!)} \cdot \frac{\tilde{d}}{(s_1!) \,\ldots \, (s_\ell!)}$.  It follows the dimension of the Hadamard product and its tropicalization are $m+\tilde{m}$.

For the degree, take stable intersection with $\Lambda_{n-m-\tilde{m}}$.  To compute the multiplicity of $\textbf{0}$ using formula ([2]) of \cite{JY}, pick $v \in \RR^{n+1}/\RR\textbf{1}$ to be generic inside the octant $\text{pos}(\textbf{e}_1, \ldots, \textbf{e}_m) + \text{neg}(\textbf{e}_{m+1}, \ldots, \textbf{e}_{n})$.  Then there are $\binom{n-m}{\tilde{m}}$ summands in that formula: indeed for each $I \subset \{m+1, \ldots, n\}$, the pair $\sigma_1=\text{pos}(\textbf{e}_1, \ldots, \textbf{e}_m) + \text{neg}(\textbf{e}_{i}: i \in I)$ and $\sigma_2 = \text{neg}(\textbf{e}_{i}: i \in \{m+1, \ldots, n\} \setminus I)$ contributes a term, namely:
\[
\frac{d}{(r_1!) \,\ldots \, (r_k!)} \cdot \frac{\tilde{d}}{(s_1!) \,\ldots \, (s_\ell!)} \cdot 1 \cdot 1.
\]
This gives a total of $\binom{n-m}{\tilde{m}} \cdot \frac{d}{(r_1!) \,\ldots \, (r_k!)} \cdot \frac{\tilde{d}}{(s_1!) \,\ldots \, (s_\ell!)}$, and the corollary follows.
\end{proof}

We conclude with two Pl\"ucker formulas for products in the regime of Theorem \ref{degformula}.  They consist of bracket polynomials, like in Proposition \ref{Plucker}, and correspond to numerical Examples \ref{firstex} and \ref{secondex}, respectively.

\begin{Ex}\label{distinctlines}
Let $L$ and $M$ be generic distinct lines in $\PP^3$, with Pl\"ucker coordinates $ \textup{[}ij\textup{]} := \textup{[}i, j\textup{]}_{L}$ and $\{ij\} := \textup{[}i, j\textup{]}_{M}$ respectively.  Then $L\hadamardot M$ is the quadric surface in $\PP^{3}$ cut out by:

\vspace{0.25cm}
\begin{center}
{\footnotesize
\Lonetwo \Lonethree \Ltwothree \Monetwo \Monethree \Mtwothree \, $x_{0}^2$  $+$ \Lzerotwo \Lzerothree \Ltwothree \Mzerotwo \Mzerothree \Mtwothree \, $x_{1}^2$

\vspace{0.08cm}

$+$ \Lzeroone \Lzerothree \Lonethree \Mzeroone \Mzerothree \Monethree \, $x_{2}^2$ $+$ \Lzeroone \Lzerotwo \Lonetwo \Mzeroone \Mzerotwo \Monetwo \, $x_{3}^2$

\vspace{0.08cm}

$-$ \Ltwothree \Mtwothree $\big ($\Lzerotwo \Lonethree \Mzerothree \Monetwo \hspace{.07em} $+$  \Lzerothree \Lonetwo \Mzerotwo \Monethree $\big ) \, x_0x_1 $

\vspace{0.08cm}

$+$ \Lonethree \Monethree $\big ($\Lzeroone \Ltwothree \Mzerothree \Monetwo  \hspace{.07em} $+$  \Lzerothree \Lonetwo \Mzeroone \Mtwothree $\big ) \, x_0x_2 $

\vspace{0.08cm}

$-$ \Lonetwo \Monetwo $\big ($\Lzeroone \Ltwothree \Mzerotwo \Monethree \hspace{.07em} $+$  \Lzerotwo \Lonethree \Mzeroone \Mtwothree $\big) \, x_0x_3 $

\vspace{0.08cm}

$-$ \Lzerothree \Mzerothree $\big ($\Lzeroone \Ltwothree \Mzerotwo \Monethree \hspace{.07em} $+$  \Lzerotwo \Lonethree \Mzeroone \Mtwothree $\big ) \, x_1x_2 $

\vspace{0.08cm}

$+$ \Lzerotwo \Mzerotwo $\big($\Lzeroone \Ltwothree \Mzerothree \Monetwo \hspace{.07em} $+$  \Lzerothree \Lonetwo \Mzeroone \Mtwothree $\big ) \, x_1x_3 $

\vspace{0.08cm}

$-$ \Lzeroone \Mzeroone $\big($\Lzerotwo \Lonethree \Mzerothree \Monetwo \hspace{.07em} $+$  \Lzerothree \Lonetwo \Mzerotwo \Monethree $\big ) \, x_2x_3 $}.

\end{center}


\vspace{.1in}

To prove this formula, substitute $\textup{[}ij\textup{]} = a_{0i}a_{1j} - a_{0j}a_{1i}$, $\{ij\} = b_{0i}b_{1j} - b_{0j}b_{1i}$, and $x_{i} = (\lambda_{0}a_{0i} + \lambda_{1}a_{1i})(\mu_{0}b_{0i} + \mu_{1}b_{1i})$ in \texttt{Macaulay2} , and get identically zero.

To find this formula, note the incidence variety
\[
\mathds{X} = \overline { {\big \{} (p, L, M) \in \PP^{3} \times \text{Gr}(2, 4) \times \text{Gr}(2, 4) : \, \,  p \in L \hadamardot M {\big \}}}
\]
has three group actions:
\begin{itemize}
\item $\mathfrak{S}_{2}$ acts by switching $L$ and $M$
\item $\mathfrak{S}_{4}$ acts by permuting the homogeneous coordinates of $\PP^{3}$
\item $(\CC^{*})^{4} / \CC^{*}$ acts by scaling the homogeneous coordinates of $\PP^{3}$.
\end{itemize}
So, the defining equation of $\mathds{X}$ is $(\mathfrak{S}_{2} \times \mathfrak{S}_{4})$-symmetric and $\mathbb{Z}^3$-multihomogeneous.  Also, specializing $L = M$ should give the square of the linear equation in Corollary \ref{lineareqn}.  Lastly, guess the formula has integer coefficients, on the basis of Example \ref{firstex}.  Putting these clues together, it is straightforward to find the formula.
\end{Ex}

%
%
%
%

\begin{Ex}\label{twoplanes}
Let $P$ be a generic 2-plane in $\PP^5$, with Pl\"ucker coordinates $ \textup{[}ijk\textup{]} := \textup{[}i, j, k\textup{]}_{P}$.  Then $P^{\hadamardot 2}$ is the cubic hypersurface in $\PP^5$.  In a defining equation, the coefficient of $x_0^3$ is {\small $-(-1)^{0+0+0}$} times:

\vspace{0.25cm}

{\centering
{\footnotesize

\noindent \hspace{4cm} $
\textup{[}123\textup{]}
\textup{[}124\textup{]}
\textup{[}125\textup{]}
\textup{[}134\textup{]}
\textup{[}135\textup{]}
\textup{[}145\textup{]}
\textup{[}234\textup{]}
\textup{[}235\textup{]}
\textup{[}245\textup{]}
\textup{[}345\textup{]}
$}.}

\vspace{0.25cm}

\noindent The coefficient of $x_0^2x_1$ is {\small $-(-1)^{0+0+1}$} times:

\vspace{0.25cm}

{\tiny
\noindent $ {\Big (}\textup{[}023\textup{]}
\textup{[}045\textup{]}
\textup{[}124\textup{]}
\textup{[}125\textup{]}
\textup{[}134\textup{]}
\textup{[}135\textup{]} \, \, + \, \,
\textup{[}024\textup{]}
\textup{[}035\textup{]}
\textup{[}123\textup{]}
\textup{[}125\textup{]}
\textup{[}134\textup{]}
\textup{[}145\textup{]} \, \, + \, \,
\textup{[}025\textup{]}
\textup{[}034\textup{]}
\textup{[}123\textup{]}
\textup{[}124\textup{]}
\textup{[}135\textup{]}
\textup{[}145\textup{]} {\Big)}
\textup{[}234\textup{]}
\textup{[}235\textup{]}
\textup{[}245\textup{]}
\textup{[}345\textup{]}
$}.

\vspace{0.25cm}

\noindent The coefficient of $x_0x_1x_2$ is {\small $-(-1)^{0+1+2}$} times:

\vspace{0.25cm}

\begin{center}
{\tiny
$
\textup{[}013\textup{]}
\textup{[}024\textup{]}
\textup{[}134\textup{]}
\textup{[}234\textup{]}
\textup{[}125\textup{]}
\textup{[}035\textup{]}
\textup{[}235\textup{]}
\textup{[}045\textup{]}
\textup{[}145\textup{]}
\textup{[}345\textup{]} \, \, + \, \,
\textup{[}013\textup{]}
\textup{[}124\textup{]}
\textup{[}034\textup{]}
\textup{[}234\textup{]}
\textup{[}025\textup{]}
\textup{[}135\textup{]}
\textup{[}235\textup{]}
\textup{[}045\textup{]}
\textup{[}145\textup{]}
\textup{[}345\textup{]} \, \, + $

\vspace{0.16cm}

$
\textup{[}023\textup{]}
\textup{[}014\textup{]}
\textup{[}134\textup{]}
\textup{[}234\textup{]}
\textup{[}125\textup{]}
\textup{[}035\textup{]}
\textup{[}135\textup{]}
\textup{[}045\textup{]}
\textup{[}245\textup{]}
\textup{[}345\textup{]} \, \, + \, \,
\textup{[}023\textup{]}
\textup{[}124\textup{]}
\textup{[}034\textup{]}
\textup{[}134\textup{]}
\textup{[}015\textup{]}
\textup{[}135\textup{]}
\textup{[}235\textup{]}
\textup{[}045\textup{]}
\textup{[}245\textup{]}
\textup{[}345\textup{]} \, \, + $

$
\textup{[}123\textup{]}
\textup{[}014\textup{]}
\textup{[}034\textup{]}
\textup{[}234\textup{]}
\textup{[}025\textup{]}
\textup{[}035\textup{]}
\textup{[}135\textup{]}
\textup{[}145\textup{]}
\textup{[}245\textup{]}
\textup{[}345\textup{]} $}.
\end{center}

\vspace{0.25cm}

\noindent To get the other coefficients, act on the indices by $\mathfrak{S}_{6}$.  We find and prove this as in Example \ref{distinctlines}, here specializing $P = L^{\hadamardot 2}$ where $L$ is a line.
\end{Ex}

\vspace{.25cm}

\noindent \textbf{Acknowledgements.} The authors are grateful to Bernd Sturmfels for suggesting that we investigate multiplicative geometry and reading an earlier draft.  The third author also thanks Justin Chen and Steven Sam for helpful conversations.

\end{document}